\newtheorem{theorem}{Theorem}[section]
\newtheorem{lemma}[theorem]{Lemma}
\newtheorem{remark}[theorem]{Remark}
\newtheorem{definition}[theorem]{Definition}
\newtheorem{proposition}[theorem]{Proposition}
\newtheorem{example}[theorem]{Example}
\newtheorem{question}[theorem]{Question}
\newtheorem*{theorem*}{Theorem}
\begin{document}

\title{Brauer class over the Picard scheme of totally degenerate stable curves}

\date{\today}

\author{Qixiao Ma}
\address{Shanghai Center for Mathematical Sciences, Fudan University}
\email{qxma10@fudan.edu.cn}

\begin{abstract}
   We study the Brauer class rising from the obstruction to the existence of tautological line bundles on the Picard scheme of curves. We determine the period and index of the Brauer class in certain cases.
\end{abstract}

\maketitle

\tableofcontents

\section{Introduction}
Let's work over a fixed base field $k_0$, e.g. $\mathbb{C}$. Let $\Gamma$ be a finite graph, with degree at least $4$ at each vertex. Let $X/k$ be the universal totally degenerate stable curve with dual graph $\Gamma$. Let $\mathrm{Pic}^0_{X/k}$ be the identity component of the Picard scheme of $X$. Tautological line bundles do not necessarily exist over $X\times\mathrm{Pic}^0_{X/k}$, the obstruction is given by a Brauer class $\alpha\in\mathrm{Br}(\mathrm{Pic}^0_{X/k})$. We determine the period and index of the Brauer class in certain cases. The motivation of this work is to produce some division algebras of prime degree, serving as candidates of the cyclicity problem, see \cite{AESU}.

In section \ref{sec3}, we associate the Brauer class with some extension class analogous to the ``elementary obstruction'' in \cite{Sko}. In section \ref{sec4}, we use this observation to get lower bounds on period of the Brauer class. The most interesting bound is:

\begin{theorem*}
Let $\sigma$ be an element in $\mathrm{Aut}(\Gamma)$ of order $m$.  If $L$ is a simple loop in $\Gamma$ fixed by $\sigma$, such that $\langle L\rangle \subseteq \mathrm{H}_1(\Gamma,\mathbb{Z})$ is a direct summand as $\langle\sigma\rangle$-module, then $m$ divides $\mathrm{per}(\alpha)$.
\end{theorem*}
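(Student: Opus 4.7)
The plan is to use the fixed loop $L$ to pull back $\alpha$ to a well-chosen $\mathbb{G}_m$-subgroup of the torus $T := \mathrm{Pic}^0_{X/k}$, where the extension-class description of section \ref{sec3} reduces the period computation to a cyclic-algebra calculation. Since $X$ is totally degenerate with dual graph $\Gamma$, the torus $T$ has character lattice canonically isomorphic to $M := \mathrm{H}_1(\Gamma, \mathbb{Z})$, and the automorphism $\sigma$ acts on $T$ via its given action on $M$. The hypothesis yields a $\sigma$-equivariant decomposition $M = \langle L\rangle \oplus M'$ with trivial $\sigma$-action on $\langle L\rangle \cong \mathbb{Z}$; dually, this corresponds to a $\sigma$-equivariant splitting $T \cong \mathbb{G}_m \times T'$ and, in particular, a $\sigma$-equivariant cocharacter $s_L\colon \mathbb{G}_m \hookrightarrow T$.

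I would then pull $\alpha$ back along $s_L$ to obtain $s_L^*\alpha \in \mathrm{Br}(\mathbb{G}_m)$. Since pullback does not increase the period, it suffices to show $m \mid \mathrm{per}(s_L^*\alpha)$. By the extension-class interpretation of $\alpha$ from section \ref{sec3}, the class $s_L^*\alpha$ should be represented by the restriction of the associated extension of $\langle\sigma\rangle$-modules along the $\sigma$-equivariant inclusion $\mathbb{Z} \cong \langle L\rangle \hookrightarrow M$. Because $\sigma$ acts trivially on $\langle L\rangle$, this restricted extension class lies in $\mathrm{H}^2(\langle\sigma\rangle, \mathbb{G}_m(\mathbb{G}_m))$, and an explicit cocycle translation should identify it with the cyclic symbol algebra determined by $\sigma$ and the tautological character $t \in \mathcal{O}(\mathbb{G}_m)^\times$; such a symbol has period exactly $m$, giving the desired bound $m \mid \mathrm{per}(\alpha)$.

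The main obstacle is the final cocycle identification: pinning down the image of the section \ref{sec3} cocycle under the projection $M \twoheadrightarrow \langle L\rangle$ and verifying that it is not a coboundary. The direct-summand hypothesis is essential precisely at this step, for it guarantees that the restriction of the extension to the $L$-component is well-defined as an honest extension of trivial-$\sigma$ modules, and that the splitting $s_L$ exists compatibly with the $\sigma$-action. If $\langle L\rangle$ were merely a $\sigma$-stable submodule rather than a direct summand, the $\sigma$-action could couple it to $M'$ in a way that forces the pulled-back symbol to trivialize, and the lower bound would fail.
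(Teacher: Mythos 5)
Your skeleton is essentially the paper's: restrict to $\langle\sigma\rangle$, use the direct-summand hypothesis to project the extension class of Section \ref{sec3} onto $\langle L\rangle\cong\mathbb{Z}$, and detect a class of order $m$ there. Your endgame is a legitimate variant: instead of using the injectivity of $i\circ j$ (Propositions \ref{iinj} and \ref{jinj}) to identify $\mathrm{per}(\alpha)$ with the order of $\alpha_e$ and then evaluating the image in $\mathrm{H}^2(\langle\sigma\rangle,\mathbb{Z})\cong\mathbb{Z}/m$, you pull $\alpha$ back along a cocharacter and invoke the exact period of a cyclic symbol, which (via a residue computation at $t=0$) would indeed bypass those two propositions. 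Two small corrections: the operation you need on the extension class is the pushforward along the projection $M\twoheadrightarrow\langle L\rangle$, not a restriction along the inclusion $\langle L\rangle\hookrightarrow M$; and the splitting $T\cong\mathbb{G}_m\times T'$ exists only after base change to the fixed field $k_1=(k')^{\langle\sigma\rangle}$, since the decomposition is only $\langle\sigma\rangle$-equivariant. Neither affects the period bound, since pullback does not increase period.

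The genuine gap is the step you defer as ``the main obstacle'': showing that the projected class is a \emph{generator} of $\mathrm{H}^2(\langle\sigma\rangle,\mathbb{Z})\cong\mathbb{Z}/m$, equivalently that your symbol is $(k'(t)/k_1(t),\sigma,t)$ with exponent $1$ on $t$ and hence of period exactly $m$ rather than a proper divisor of $m$. This does not follow from ``$L$ fixed by $\sigma$'' together with the direct-summand hypothesis. It requires the extra structure stated in the body of the paper (Theorem \ref{thm}) but suppressed in the introduction: $L=\bigcup_{i=0}^{m-1}\sigma^i(ve)$ is a single $\sigma$-orbit of a segment $e$ joining $v$ to $\sigma(v)$. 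With that, the computation via Proposition \ref{cal} runs: lift $1\in\mathrm{X}(\mathbb{G}_m)$ to $\chi_v$, take the coboundary to get the $1$-cocycle $\sigma^i\mapsto\chi_{\sigma^i v}-\chi_v$, lift it to $\sigma^i\mapsto e+\sigma(e)+\cdots+\sigma^{i-1}(e)$ in $\mathbb{Z}^{\oplus|E|}$, and the second coboundary is the carrying cocycle $c_{i,j}$ equal to $0$ for $i+j<m$ and to $1\cdot L$ for $i+j\geq m$ --- the canonical generator precisely because $\sum_{i=0}^{m-1}\sigma^i(e)=1\cdot L$. If $\sigma$ fixed $L$ pointwise one could take $v$ with $\sigma(v)=v$ and the projected class would vanish; if $\sigma$ rotated $L$ with order $m'<m$ the class would have order only $m'$. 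So your closing paragraph misplaces the essential hypothesis: the direct summand merely furnishes the projection (equivalently the $\sigma$-equivariant cocharacter), while the non-vanishing and the exact order $m$ come from the orbit structure of $\sigma$ on $L$, which your sketch never invokes and without which the identification with the symbol $(\sigma,t)$ cannot be carried out.
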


In section \ref{sec6}, we give a list of examples. In section \ref{sec5} we discuss some further questions.
\vspace{1mm}

\textbf{Acknowledgements.} This work is a part of my thesis at Columbia University. I am very grateful to my advisor Aise Johan de Jong for his invaluable ideas, enlightening discussions and unceasing encouragement. I am also very grateful to the 2018 Brauer Group conference, where I first had chance to present this work and got warmly encouraged by many people.

\section{Preliminaries}\label{sec2}

\subsection{Torus and its character module}\label{2.1} We recall some basic properties of tori, see \cite{Mil}.

Let $k$ be a field, let $k^s$ be its separable closure. Let $G=\mathrm{Gal}(k^s/k)$ be the absolute Galois group of $k$.
Let $X$ be a scheme over $k$. For any $\sigma\in G$, we denote the morphism $1_X\times\mathrm{Spec}(\sigma)\colon X_{k^s}\to X_{k^s}$ by $\sigma_X$.

Let $T$ be a group scheme over $k$. We say $T$ is a torus if $T_{k^s}\cong (\mathbb{G}_{m,k^s})^r$. The character module of $T$ is a right $\mathbb{Z}[G]$-module, denoted by $\mathrm{X}(T)$, whose underlining abelian group is $\mathrm{Hom}(T_{k^{s}},\mathbb{G}_{m,k^{s}})\cong\mathbb{Z}^r$. The $G$-action on $\mathrm{X}(T)$ is given by $(\sigma,\chi)\mapsto \sigma(\chi)=\sigma_{\mathbb{G}_m}\circ\chi\circ\sigma_T^{-1}$.
We say the torus $T$ is split if $T\cong\mathbb{G}_{m,k}^r$.
The contravariant functor $T\mapsto \mathrm{X}(T)$ induces an equivalence between the category of $k$-tori and the category of $G$-representation on finite rank free $\mathbb{Z}$-modules.

\subsection{Totally degenerate stable curves}
Let $k$ be a field, let $k^s$ be its separable closure. Let $X$ be a $k$-scheme, such that $X_{k^s}$ is a Deligne-Mumford stable curve. We say $X$ is a totally degenerate stable curve, if all the irreducible components of $X_{k^s}$ are rational curves. The dual graph of $X$, is a graph whose vertices are labelled by irreducible components of $X_{k^s}$, for each node in $X_{k^s}$, we assign an edge at corresponding vertices. Let $\Gamma$ be the dual graph of $X$, let's denote $\mathrm{dim}_{\mathbb{Q}}\mathrm{H}_1(\Gamma,\mathbb{Q})$ by $g(\Gamma)$, it coincides with the arithmetic genus of $X$.

\subsection{Moduli of totally degenerate stable curves}\label{2.3}
Let $\Gamma=(V,E)$ be a graph, with $|E(v)|\geq 4$ for each $v\in V$. Let's work over a fixed base field $k_0$. Let $\mathcal{M}_{g(\Gamma)}$ be the moduli stack of stable genus $g(\Gamma)$ curves.
It has a locally closed substack $\mathcal{D}_{\Gamma}$, parameterizing families of totally degenerate nodal curve with dual graph $\Gamma$. For any finite set $S$, let $\mathcal{M}_{0,S}$ be the moduli stack of stable genus $0$ curves with distinct $S$-labeled marked points. This is an irreducible smooth $k_0$ variety.
Let $\mathcal{M}_{\Gamma}=\prod_{v\in V}\mathcal{M}_{0,E(v)}$. There is an obvious $\mathrm{Aut}(\Gamma)$-action on $\mathcal{M}_\Gamma$.
The $\mathcal{D}_\Gamma$ can be naturally identified with $[\mathcal{M}_{\Gamma}/\mathrm{Aut}(\Gamma)]$ as the image of the clutching morphism $c\colon\mathcal{M}_\Gamma\to\mathcal{M}_{g(\Gamma)}$.

Since we assumed $|E(v)|\geq4$ for the graph $\Gamma$, the $\mathrm{Aut}(\Gamma)$-action on $\mathcal{M}_\Gamma$ is generically free, hence $\mathcal{D}_\Gamma=[\mathcal{M}_\Gamma/\mathrm{Aut}(\Gamma)]$ has an open substack represented by a scheme $D_\Gamma^\circ$, and there exists a universal family of totally degenerate nodal curves with dual graph $\Gamma$ over $D_\Gamma^\circ$.
Taking generic fiber, we obtain a totally degenerate nodal curve $X$ over $k=\mathrm{frac}(D^\circ_\Gamma)$. Let $k'$ be the function field of $\mathcal{M}_\Gamma$, then $k'/k$ is a Galois extension with $\mathrm{Gal}(k'/k)=\mathrm{Aut}(\Gamma)$, and $X_{k'}$ is a union of $\mathbb{P}^1_{k'}$s with $k'$-rational nodes and dual graph $\Gamma$. We refer to \cite[XII.10]{ACGH} for details.

\subsection{Picard scheme of curves}\label{Picardofcurve}
We recall some facts about the Picard scheme of curves, for details, see \cite[8]{BLR}. Let's work over a fixed field $k$, let's denote its separable closure by $k^s$.

Let $X$ be a geometrically reduced and geometrically connected proper curve defined over $k$. Consider the functor $P'_{X/k}\colon \mathrm{Sch}/k\to \mathrm{Sets},\ T\mapsto\mathrm{Pic}(X\times T)/\mathrm{pr}_2^*\mathrm{Pic}(T)$. We define the Picard functor $P_{X/k}$ to be the \'etale sheafification of $P'_{X/k}$. We know $P'_{X/k}$ is in general not representable, but $P_{X/k}$ is always represented by a smooth $k$-group scheme $\mathrm{Pic}_{X/k}$. Note that the representability of $P'_{X/k}$ means there exist tautological line bundles on $X\times \mathrm{Pic}_{X/k}\to\mathrm{Pic}_{X/k}$, and
the representability of $P_{X/k}$ means there exist tautological line bundles after some \'etale base change $U\to\mathrm{Pic}_{X/k}$. If $k'/k$ is a separable extension such that $X(k')\neq0$, we can always take $U=\mathrm{Pic}_{X/k}\times_kk'$.

Let $\mathrm{Pic}^0_{X/k}$ be the identity component of $\mathrm{Pic}_{X/k}$. When $X$ be a totally degenerate nodal curve, one can show $\mathrm{Pic}^0_{X/k}$ is a torus. More precisely, let $\nu\colon X^\nu_{k^s}\to X_{k^s}$ be the normalization map, let $\pi\colon X_{k^s}\to \mathrm{Spec}(k^s)$ be the structure map. Apply $\mathrm{R}\pi_*$ to the short exact sequence:

$$\xymatrix{
0\ar[r]&\mathbb{G}_m\ar[r]&\nu_*\mathbb{G}_m\ar[r]&\nu_*\mathbb{G}_m/\mathbb{G}_m\ar[r]& 0},
$$
let's denote the dual graph of $X$ by $\Gamma=(V,E)$, then we have long exact sequence:

$$\xymatrix{
0\ar[r]&\mathbb{G}_{m,k^s}\ar[r]& \mathbb{G}_{m,k^s}^{\oplus|V|}\ar[r]&\mathbb{G}_{m,k^s}^{\oplus|E|}\ar[r]& \mathrm{Pic}_{X_{k^s}/k^s}\ar[r]^{\mathrm{deg}}\ar[r] &\mathbb{Z}^{\oplus|V|}\to 0},$$ here the map ``deg'' assigns a line bundle to the degree of its restriction on each component. Then $\mathrm{Pic}^0_{X_{k^s}/k^s}=\mathrm{ker}(\mathrm{deg})$ descends to the torus $\mathrm{Pic}^0_{X/k}$. Its functor of points represents line bundles whose pullback to the normalization $X_{k^s}^\nu$ has degree $0$ on each component, we call such line bundles multi-degree zero line bundles.

To sum up, descending the above exact sequence, we get an exact sequence of tori:
$$\xymatrix{
0\ar[r]&\mathbb{G}_m\ar[r]&T_1\ar[r]&T_2\ar[r]& \mathrm{Pic}^0_{X/k}\ar[r]&0}.
$$

Taking characters, we conclude that:
\begin{proposition}\label{homology}
If $X$ is a totally degenerate curve with dual graph $\Gamma$, then there is canonical $\mathrm{Gal}(k^s/k)$-module isomorphism $$\mathrm{X}(\mathrm{Pic}^0_{X/k})\cong \mathrm{H}_1(\Gamma,\mathbb{Z}).$$
\end{proposition}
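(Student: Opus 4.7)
The plan is to apply the contravariant character equivalence to the exact sequence of tori
$$0 \to \mathbb{G}_m \to T_1 \to T_2 \to \mathrm{Pic}^0_{X/k} \to 0$$
that was constructed in the preceding discussion, and then recognize the resulting complex of Galois modules as the cellular chain complex of the graph $\Gamma$. Since $T\mapsto \mathrm{X}(T)$ is an exact contravariant equivalence from $k$-tori to finite free $\mathbb{Z}[G]$-modules, we obtain an exact sequence
$$0 \to \mathrm{X}(\mathrm{Pic}^0_{X/k}) \to \mathrm{X}(T_2) \to \mathrm{X}(T_1) \to \mathbb{Z} \to 0,$$
where over $k^s$ we can identify $\mathrm{X}(T_2)\cong \mathbb{Z}^{\oplus E}$ and $\mathrm{X}(T_1)\cong \mathbb{Z}^{\oplus V}$.

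The next step is to identify the middle arrow $\mathbb{Z}^{\oplus E}\to\mathbb{Z}^{\oplus V}$ with the boundary map of the cellular chain complex of $\Gamma$. Choose an orientation on each edge $e\in E$, giving a head $h(e)$ and tail $t(e)$. Going back to the construction, the map $\mathbb{G}_m^{\oplus V}\to\mathbb{G}_m^{\oplus E}$ on $k^s$ points is $(\lambda_v)_{v\in V}\mapsto (\lambda_{h(e)}\lambda_{t(e)}^{-1})_{e\in E}$, since it measures the discrepancy between the values of a section of $\nu_*\mathbb{G}_m$ at the two branches of each node. Dualizing, the character map sends the basis vector $e\in\mathbb{Z}^{\oplus E}$ to $h(e)-t(e)\in\mathbb{Z}^{\oplus V}$, which is precisely the boundary map $\partial\colon C_1(\Gamma,\mathbb{Z})\to C_0(\Gamma,\mathbb{Z})$. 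Hence
$$\mathrm{X}(\mathrm{Pic}^0_{X/k})_{k^s}\ =\ \ker(\partial)\ =\ \mathrm{H}_1(\Gamma,\mathbb{Z}),$$
as expected (and the cokernel $\mathbb{Z}$ reflects $\mathrm{H}_0(\Gamma,\mathbb{Z})$ since $\Gamma$ is connected).

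Finally, I would check Galois equivariance. The Galois action on $\mathrm{X}(T_1)$ and $\mathrm{X}(T_2)$ is induced from the action on the sets $V$ and $E$: indeed $T_1$ and $T_2$ arise from the pushforward sheaves $\nu_*\mathbb{G}_m$ and $\nu_*\mathbb{G}_m/\mathbb{G}_m$, and $G=\mathrm{Gal}(k^s/k)$ acts by permuting the components of $X_{k^s}^\nu$ and the nodes of $X_{k^s}$, i.e.\ permuting $V$ and $E$. This is the same action that defines the Galois module structure on the cellular chains $C_\bullet(\Gamma,\mathbb{Z})$, so the identification above is $G$-equivariant and the induced isomorphism on $\ker(\partial)$ gives the desired canonical isomorphism $\mathrm{X}(\mathrm{Pic}^0_{X/k})\cong \mathrm{H}_1(\Gamma,\mathbb{Z})$.

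The only mild subtlety is the role of edge orientations: the Galois action may not fix them, so one either works with an orientation and tracks the resulting signs, or passes to the orientation-free description of the cellular chain complex (with generators given by oriented edges modulo the reversal relation). The argument is otherwise formal, and I expect the main real content of the proof to be the unwinding of the explicit map $T_1\to T_2$ at the level of $k^s$-points in order to match it with $\partial$.
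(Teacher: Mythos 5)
Your proposal is correct and follows the same route as the paper: apply the character functor $\mathrm{X}(-)$ to the four-term sequence of tori and recognize the result as the (augmented) simplicial chain complex of $\Gamma$, so that $\mathrm{X}(\mathrm{Pic}^0_{X/k})=\ker(\partial)=\mathrm{H}_1(\Gamma,\mathbb{Z})$. The paper leaves the identification of the dual of $T_1\to T_2$ with the boundary map and the Galois-equivariance (including the orientation/sign issue you flag) as an ``easy check,'' so your write-up simply supplies the details it omits.
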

\begin{proof} Apply $\mathrm{X}(-)$ to the above exact sequence, we get the simplicial chain complex that calculates the homology of $\Gamma$. It is easy to check this is an $\mathrm{Gal}(k^s/k)$-isomorphism.
\end{proof}
\section{The Brauer class}\label{sec3}
Let $\Gamma$ be a graph with degree at leat $4$ at each vertex. We work over a fixed field $k_0$. Let $\mathcal{D}_\Gamma$ be the moduli stack of totally degenerate curves over $k_0$ with dual graph $\Gamma$. Let $k=\mathrm{frac}(\mathcal{D}_\Gamma)$ be its function field, let $X/k$ be the universal curve. Let $k'/k$ be the $\mathrm{Aut}(\Gamma)$-Galois extension that splits $X$. Let's denote $\mathrm{Aut}(\Gamma)$ by $G$.

We introduce the Brauer class $\alpha\in\mathrm{Br}(\mathrm{Pic}_{X/k}^0)$ that will be studied later. We define an obstruction class $\alpha_d$ and an extension class $\alpha_e$. We show they are related by $j(\alpha_e)=\alpha_d,i(\alpha_d)=\alpha$ under natural injections $$\xymatrix{\mathrm{H}^2(G,\mathrm{X}(\mathrm{Pic}^0_{X/k}))\ar[r]^-{j}& \mathrm{H}^2(G,\mathrm{H}^0(\mathrm{Pic}^0_{X_{k'}/k'},\mathbb{G}_m))\ar[r]^-{i}& \mathrm{H}^2(\mathrm{Pic}^0_{X/k},\mathbb{G}_m).}$$

\subsection{Cohomological description}\label{coh}
Consider the projection $\pi\colon X\times\mathrm{Pic}^0_{X/k}\to\mathrm{Pic}^0_{X/k}$ and structure map $s\colon\mathrm{Pic}^0_{X/k}\to \mathrm{Spec}(k)$. The Leray spectral sequence $\mathrm{R}s_*\mathrm{R}\pi_*\mathbb{G}_m\Rightarrow\mathrm{R}(s\pi)_*\mathbb{G}_m$ implies the following low term short exact sequence: $$0\to\mathrm{Pic}(\mathrm{Pic}^0_{X/k})\to\mathrm{Pic}(X\times\mathrm{Pic}^0_{X/k})\to\mathrm{Pic}_{X/k}(\mathrm{Pic}^0_{X/k})\overset{\mathrm{d}_2^{0,1}}{\to}\mathrm{H}^2(\mathrm{Pic}^0_{X/k},\mathbb{G}_m),$$ see \cite[8.1.4]{BLR} for details. Then $\alpha=\mathrm{d}_2^{0,1}(1_{\mathrm{Pic}^0_{X/k}})$ is a cohomological Brauer class over $\mathrm{Pic}^0_{X/k}$. It is clear from above exact sequence that $\alpha$ is the obstruction to existence of a tautological line bundle on $X\times\mathrm{Pic}^0_{X/k}$.

\begin{definition}
We call $\alpha\in \mathrm{H}^2(\mathrm{Pic}^0_{X/k},\mathbb{G}_m)$ the Brauer class associated with the graph $\Gamma$.
\end{definition}
The following proposition shows the class lies in the Azumaya Brauer group.
\begin{proposition} Let $f\colon T\to S$ be a projective, flat and finitely presented morphism, with connected geometric fibers, and $S$ is quasi-compact, then $\mathrm{d}_{2}^{0,1}(\mathrm{Pic}_{T/S}(S))\subset \mathrm{Br}(S).$
\end{proposition}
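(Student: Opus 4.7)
The plan is to realize every obstruction class $\mathrm{d}_{2}^{0,1}(\xi)$, for $\xi\in\mathrm{Pic}_{T/S}(S)$, as the class of an honest Brauer--Severi scheme over $S$, hence of an Azumaya $\mathcal{O}_S$-algebra. First I would unwind the definition of $\mathrm{Pic}_{T/S}$ as an \'etale sheafification: there exists an \'etale cover $\{U_i\to S\}$ together with line bundles $L_i$ on $T_i:=T\times_S U_i$ representing $\xi|_{U_i}$, and on overlaps $T_{ij}$ one has $L_i\cong L_j\otimes f_{ij}^*M_{ij}$ for certain line bundles $M_{ij}\in\mathrm{Pic}(U_{ij})$. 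After further refining so that every $M_{ij}$ is trivializable and choosing trivializations, the transition data assemble on triple overlaps into a \v{C}ech $2$-cocycle in $\mathbb{G}_m$; a direct unwinding of the Leray spectral sequence identifies its class in $\mathrm{H}^2(S,\mathbb{G}_m)$ with $\mathrm{d}_{2}^{0,1}(\xi)$.

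Next I would push the $L_i$ forward to $S$. Fix an $f$-relatively ample line bundle $\mathcal{O}_T(1)$; by Serre vanishing together with standard boundedness for projective flat families of sheaves, and using that $S$ is quasi-compact, one can find a single integer $n\gg 0$ such that for every $i$ the sheaf $E_i:=(f_i)_*(L_i(n))$ is locally free of some rank $r_i$, has vanishing higher direct images, and has formation commuting with arbitrary base change. Since the fibre rank at $s\in S$ depends only on $L_i|_{T_s}$, which does not depend on the chart $i$ containing $s$, the $r_i$ agree on overlaps and define a locally constant rank; after passing to connected components I may take it to be a fixed $r$. Twisting $L_i$ by $f_{ij}^*M_{ij}$ twists $E_i$ by $M_{ij}$, so on $U_{ij}$ we have $E_i\cong E_j\otimes M_{ij}$. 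Since projectivisation is insensitive to line bundle twists, the projective bundles $\mathbb{P}(E_i)$ glue canonically to a Brauer--Severi scheme $P\to S$, and hence to an Azumaya algebra $\mathcal{A}$ of degree $r$ with $[\mathcal{A}]\in\mathrm{Br}(S)$.

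To finish I would compare classes. Reading off the $PGL_r$-torsor underlying $P$ from the gluing data $\{E_i\cong E_j\otimes M_{ij}\}$, its image under the coboundary $\mathrm{H}^1(S,PGL_r)\to\mathrm{H}^2(S,\mathbb{G}_m)$ of the extension $1\to\mathbb{G}_m\to GL_r\to PGL_r\to 1$ is exactly the cocycle $\{M_{ij}\}$ produced in the first paragraph, which represents $\mathrm{d}_{2}^{0,1}(\xi)$. Hence $[\mathcal{A}]=\mathrm{d}_{2}^{0,1}(\xi)$, proving the required containment in the Azumaya Brauer subgroup.

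The main obstacle is the uniform choice of $n$ in the second step: a priori different $L_i$ on different charts could need different Serre twists, and arranging a common $n$ is exactly where the quasi-compactness hypothesis on $S$ enters, together with boundedness for projective flat families. Once such a uniform twist exists and the pushforwards $E_i$ behave well under base change, everything else is formal descent and cocycle bookkeeping.
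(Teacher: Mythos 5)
Your argument is essentially correct, but note that the paper does not prove this proposition at all: it simply cites Giraud [4.9.1]. What you have written is a self-contained \v{C}ech version of the standard argument that underlies that citation (push forward a sufficiently ample twist of the local tautological bundles to obtain a ``twisted vector bundle,'' whose associated $PGL_r$-torsor realizes the obstruction class), so in substance you are reproving Giraud's theorem rather than diverging from the paper. The main steps all hold up: quasi-compactness lets you refine to a finite \'etale cover (\'etale maps are open), after which a uniform Serre twist $n$ exists by taking the maximum over finitely many charts, and the rank is locally constant since it equals the Euler characteristic once higher cohomology vanishes. Two small points deserve care. First, rather than gluing the projective bundles $\mathbb{P}(E_i)$ (where effectivity of \'etale descent for schemes requires an argument, e.g.\ a descending ample bundle), it is cleaner to glue the sheaves $\mathcal{E}nd(E_i)\cong\mathcal{E}nd(E_j\otimes M_{ij})\cong\mathcal{E}nd(E_j)$ directly: the scalar ambiguities act trivially on endomorphisms, the cocycle condition holds on the nose, and descent for quasi-coherent algebras is automatic; the resulting Azumaya algebra represents the class. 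Second, your bookkeeping implicitly uses that $f_*\mathcal{O}_T=\mathcal{O}_S$ holds universally (so that the target of $\mathrm{d}_2^{0,1}$ is $\mathrm{H}^2(S,\mathbb{G}_m)$ and the ambiguity in each isomorphism $L_i\cong L_j\otimes f_{ij}^*M_{ij}$ is a unit on the base); this is where the hypothesis on geometric fibers enters and should be stated, since connectedness alone does not give it without reducedness or cohomological flatness in degree $0$. With those two adjustments your proof is a complete and correct replacement for the citation.
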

\begin{proof} See \cite[4.9.1]{Giraud}.
\end{proof}

\subsection{Galois descent description}\label{Galde}
Note that $X(k')\neq \emptyset$, therefore tautological line bundles exist on $X\times\mathrm{Pic}^0_{X_{k'}/k'}$. Two tautological line bundles differ by tensoring pullback of a line bundle on $\mathrm{Pic}^0_{X_{k'}/k'}$.
Note that $\mathrm{Pic}^0_{X_{k'}/k'}\cong(\mathbb{G}_{m,k'})^{\oplus g(\Gamma)}$ is the affine scheme of a unique factorization domain, it has only trivial line bundles, therefore tautological line bundles on $X\times\mathrm{Pic}^0_{X_{k'}/k'}$ are unique up to isomorphism, denote it by $\mathcal{L}$.

Let's denote $X\times\mathrm{Pic}^0_{X/k}$ by $Y$. Let's denote the base change of $\sigma_Y$\footnote{See section \ref{2.1}} to $Y_{k'}$ still by $\sigma_Y$. Note that by uniqueness, tautological line bundles pull back to tautological lines bundle under base change, thus $\mathcal{L}$ and $\sigma_Y^*\mathcal{L}$ are isomorphic. For each $\sigma\in G$, let's fix an isomorphism $\phi_\sigma\colon \mathcal{L}\cong \sigma_Y^*\mathcal{L}$.
We show the collection of isomorphisms $\{\phi_\sigma\}_{\sigma\in G}$ can be modified to a descent datum if and only if some second order group cohomology class vanishes.

Let's note that choosing the isomorphisms $\phi_\sigma$ are equivalent to pinning down the image of $1$ in for some isomorphism $s_\sigma \colon\mathcal{O}\to \mathcal{L}^{-1}\otimes\sigma_Y^*\mathcal{L}$. We fix a rational section $t$ of $\mathcal{L}$, and define the isomorphisms $r_\sigma\colon\mathrm{pr}_{2,*}(\mathcal{L}^{-1}\otimes\sigma_Y^*\mathcal{L})\cong\mathcal{O}$ on $\mathrm{Pic}^0_{X_{k'}/k'}$ such that $r_\sigma(t^{-1}\otimes\sigma_Y^*t)=1$.

Consider the cocycle $\alpha_d\colon (\sigma,\tau) \mapsto r_{\sigma}(s_{\sigma}(1))\cdot (r_{\sigma\tau}(s_{\sigma\tau}(1)))^{-1}\cdot\sigma^*(r_{\tau}(s_{\tau}(1)))$,  unravelling the definitions, one checks that the collection of isomorphisms $\{\phi_{\sigma}\}_{\sigma\in G}$ can be modified to descent datum if and only if the class of $\alpha_d$ is zero in $\mathrm{H}^2(G,\mathrm{H}^0(\mathrm{Pic}^0_{X_{k'}/k'},\mathbb{G}_m))$. Since Galois descent of coherent sheaves is effective, the class $\alpha_d$ is the obstruction to existence to tautological line bundles on $X\times\mathrm{Pic}^0_{X/k}$.

\begin{definition}We call $\alpha_d\in \mathrm{H}^2(G,\mathrm{H}^0(\mathrm{Pic}^0_{X_{k'}/k'},\mathbb{G}_m))$ the descent obstruction class associated with $\Gamma$.
\end{definition}
By the Hochschild-Serre spectral sequence, we have natural edge map
$$\xymatrix{i\colon \mathrm{H}^2(G,\mathrm{H}^0(\mathrm{Pic}^0_{X_{k'}/k},\mathbb{G}_m))\ar[r]&  \mathrm{H}^2(\mathrm{Pic}^0_{X/k},\mathbb{G}_m).}$$
\begin{proposition}\label{iinj}The map $i$ is an injection.
\end{proposition}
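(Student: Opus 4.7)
The plan is to deduce the injectivity of $i$ from the Hochschild--Serre spectral sequence for the finite \'etale $G$-Galois cover $\mathrm{Pic}^0_{X/k}\otimes_kk'\to\mathrm{Pic}^0_{X/k}$, obtained by base change from $\mathrm{Spec}(k')\to\mathrm{Spec}(k)$. The crucial input will be that the Picard group of the fiber $\mathrm{Pic}^0_{X_{k'}/k'}$ vanishes, which kills the only transgression that could otherwise contribute to the kernel of $i$.

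First, I would observe, as established in Section \ref{Picardofcurve}, that $\mathrm{Pic}^0_{X_{k'}/k'}$ is a \emph{split} torus of rank $g(\Gamma)$ over $k'$, since $X_{k'}$ is a union of $\mathbb{P}^1_{k'}$'s with $k'$-rational nodes. As a $k'$-scheme it is therefore $\mathrm{Spec}(k'[t_1^{\pm1},\ldots,t_{g(\Gamma)}^{\pm1}])$, whose coordinate ring is a Laurent polynomial ring over a field, hence a unique factorization domain. As a regular Noetherian UFD, it has trivial divisor class group, so
$$H^1_{\mathrm{\acute{e}t}}(\mathrm{Pic}^0_{X_{k'}/k'},\mathbb{G}_m)=\mathrm{Pic}(\mathrm{Pic}^0_{X_{k'}/k'})=0.$$

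Then I would apply the Hochschild--Serre spectral sequence
$$E_2^{p,q}=H^p(G,H^q_{\mathrm{\acute{e}t}}(\mathrm{Pic}^0_{X_{k'}/k'},\mathbb{G}_m))\Rightarrow H^{p+q}_{\mathrm{\acute{e}t}}(\mathrm{Pic}^0_{X/k},\mathbb{G}_m),$$
and read off its five-term exact sequence, which includes the segment
$$H^0(G,E_2^{0,1})\longrightarrow H^2(G,E_2^{0,0})\xrightarrow{\ i\ } H^2_{\mathrm{\acute{e}t}}(\mathrm{Pic}^0_{X/k},\mathbb{G}_m).$$
By the previous step $E_2^{0,1}=0$, so the leftmost term vanishes and $i$ is injective.

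There is no real obstacle here: the only substantive input is the vanishing $\mathrm{Pic}(\mathrm{Pic}^0_{X_{k'}/k'})=0$, which is a formal consequence of the split-torus structure over $k'$; the remainder is the standard five-term exact sequence of the Hochschild--Serre spectral sequence. Equivalently, one could argue directly on the $E_2$-page: the only differential with target $E_r^{2,0}$ that could contribute to the kernel of the edge map is $d_2\colon E_2^{0,1}\to E_2^{2,0}$, and this is zero since its source is zero.
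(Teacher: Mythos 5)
Your proof is correct and is essentially the paper's own argument: both identify the kernel of the edge map $i$ with the image of the transgression out of $E_2^{0,1}=H^0\big(G,\mathrm{Pic}(\mathrm{Pic}^0_{X_{k'}/k'})\big)$ in the low-degree (five-term) exact sequence of the Hochschild--Serre spectral sequence, and both kill that term by observing that the split torus $\mathrm{Pic}^0_{X_{k'}/k'}$ is the spectrum of a Laurent polynomial ring, hence a UFD with trivial Picard group. The only quibble is notational: with your definition $E_2^{p,q}=H^p(G,H^q(\mathrm{Pic}^0_{X_{k'}/k'},\mathbb{G}_m))$, the terms you display should be $E_2^{0,1}\to E_2^{2,0}\to H^2$ rather than $H^0(G,E_2^{0,1})\to H^2(G,E_2^{0,0})$, but this redundancy does not affect the argument.
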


\begin{proof}The low term short exact sequence
shows the kernel is generated by the image of $\mathrm{H}^1(G,\mathrm{Pic}^0(X_{k'}))$. But
 $\mathrm{H}^1(G,\mathrm{Pic}^0(X_{k'}))=\mathrm{H}^1(G,\{1\})=0$.
\end{proof}
\begin{proposition}\label{extclass1} We have $i(\alpha_d)=\alpha$.\end{proposition}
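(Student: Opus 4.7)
The plan is to compute a Čech $2$-cocycle representative for $\alpha$ using the étale Galois cover $S_{k'}\to S$ (where $S:=\mathrm{Pic}^0_{X/k}$) together with the tautological line bundle $\mathcal{L}$ on $Y_{k'}$, and then match this cocycle directly with the explicit formula defining $\alpha_d$.

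First I would set up the comparison. Both $\alpha$ and $i(\alpha_d)$ live in $\mathrm{H}^2(S,\mathbb{G}_m)$, and the Leray spectral sequence for $\pi\colon Y\to S$ is compatible, via the Galois cover $S_{k'}\to S$, with the Hochschild--Serre spectral sequence: they are both edges of the composite filtration on $\mathrm{H}^{\ast}(Y,\mathbb{G}_m)$ obtained from the two factorizations of $Y_{k'}\to Y\to S$ and $Y_{k'}\to S_{k'}\to S$. By Proposition \ref{iinj}, the map $i$ is injective, so it is enough to produce the equality $i(\alpha_d)=\alpha$ by lifting $\alpha$ through this filtration to a canonical cocycle in $\mathrm{H}^2(G,\mathrm{H}^0(S_{k'},\mathbb{G}_m))$ and comparing it with the formula in Section \ref{Galde}.

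Next, I would unwind the definition of $\alpha$. By construction $\alpha=\mathrm{d}_2^{0,1}(1_{\mathrm{Pic}^0_{X/k}})$, the transgression of the identity section of the Picard sheaf, and this Leray differential admits the following standard Čech description. After base change to $k'$, the section $1_{\mathrm{Pic}^0_{X/k}}\in \mathrm{Pic}_{X/k}(S_{k'})$ lifts to the actual line bundle $\mathcal{L}\in\mathrm{Pic}(Y_{k'})$, so the pullback $\alpha|_{S_{k'}}$ vanishes. The Čech coboundary with respect to the cover $S_{k'}\to S$ therefore produces a $2$-cocycle with values in $\mathrm{H}^0(Y_{k'},\mathbb{G}_m)$ (because, over the intersections $Y_{k'}\times_Y Y_{k'}\cong\coprod_{\sigma\in G}Y_{k'}$, the two pulled-back tautological bundles differ by the $\mathbb{G}_m$-valued comparison isomorphism $\phi_\sigma$), and this cocycle, regarded via $\pi_{\ast}\mathbb{G}_m=\mathbb{G}_m$ on $S_{k'}$ (which holds since $\pi$ is proper with geometrically connected fibres) as an element of $\mathrm{H}^2(G,\mathrm{H}^0(S_{k'},\mathbb{G}_m))$, is precisely the lift of $\alpha$ singled out by injectivity of $i$.

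It then remains to check that this cocycle is literally the formula of Section \ref{Galde}. The isomorphisms $\phi_\sigma$ are recorded as sections $s_\sigma\colon\mathcal{O}\to\mathcal{L}^{-1}\otimes\sigma_Y^{\ast}\mathcal{L}$, and their failure to satisfy the cocycle relation $s_{\sigma\tau}=s_\sigma\cdot\sigma_Y^{\ast}s_\tau$ is the Čech $2$-cocycle obstruction to descent of $\mathcal{L}$. The rational section $t$ of $\mathcal{L}$ gives the trivializations $r_\sigma\colon\mathrm{pr}_{2,\ast}(\mathcal{L}^{-1}\otimes\sigma_Y^{\ast}\mathcal{L})\cong\mathcal{O}_{S_{k'}}$ used to push this cocycle down to units on $S_{k'}$, and unwinding the definitions one obtains exactly the formula $(\sigma,\tau)\mapsto r_\sigma(s_\sigma(1))\cdot r_{\sigma\tau}(s_{\sigma\tau}(1))^{-1}\cdot\sigma^{\ast}r_\tau(s_\tau(1))$ for $\alpha_d$.

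The main obstacle is the bookkeeping comparison between the Leray transgression $\mathrm{d}_2^{0,1}$ and the Hochschild--Serre boundary computing descent obstructions: one must verify that sign conventions align and that the identification $\pi_{\ast}\mathbb{G}_m=\mathbb{G}_m$ on $S_{k'}$, mediated by $t$ and the normalizations $r_\sigma$, is exactly the one implicit in the edge map $i$. This is formal once the compatibility of the two spectral sequences is written down, but it requires a careful cocycle-level diagram chase.
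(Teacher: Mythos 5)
Your proposal is correct and is essentially the paper's argument: the paper also compares the Leray spectral sequence for $\mathrm{pr}_2$ with the Galois-descent (Hochschild--Serre) spectral sequence --- obtained there by factoring $\Gamma(\mathrm{Pic}^0_{X/k},-)$ through $\Gamma(\mathrm{Pic}^0_{X_{k'}/k'},-)$ followed by $(-)^G$ and citing the abstract compatibility of composite-functor spectral sequences --- to produce a map $z$ with $i\circ z=\mathrm{d}_2^{0,1}$, and then identifies $z(1_{\mathrm{Pic}^0_{X/k}})$ with $\alpha_d$ by the same cocycle unwinding you carry out. Your \v{C}ech computation over the cover $\mathrm{Pic}^0_{X_{k'}/k'}\to\mathrm{Pic}^0_{X/k}$ is just the concrete instantiation of that comparison, so no substantive difference.
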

\begin{proof}
Consider the following array of functors
from the category of sheaves to category of modules: $$\xymatrix{
\mathrm{Sh}(X\times\mathrm{Pic}^0_{X_{k'}/k'})
\ar[r]^-{\mathrm{pr}_{2,*}}&
\mathrm{Sh}(\mathrm{Pic}^0_{X_{k'}/k'})
\ar[r]^-{\Gamma(-)}&
\mathrm{Mod}_{k[G]}
\ar[r]^-{{(-)^G}}
&\mathrm{Mod}_k,}$$
contracting the first two functors or last two functors, we get two spectral sequences. By \cite[08BI]{SP}, there exists natural morphism: $$\xymatrix{z\colon\mathrm{Pic}_{X/k}(\mathrm{Pic}^0_{X/k})\ar[r]& \mathrm{H}^0(G,\mathrm{Pic}_{X_{k'}/k'}(\mathrm{Pic}^0_{X_{k'}/k'}))\ar[d]^{\sim}&\\
&\mathrm{H}^0(G,\mathrm{H}^1(\mathrm{Pic}^0_{X_{k'}/k'},\mathrm{pr}_{2,*}\mathbb{G}_m))\ar[r]& \mathrm{H}^2(G,\mathrm{H}^0(\mathrm{Pic}^0_{X_{k'}/k'},\mathbb{G}_m))
}$$ such that $i\circ z=\mathrm{d}_2^{0,1}$. One checks that $z(1_{\mathrm{Pic}^0_{X/k}})=\alpha_d$, thus $i(\alpha_d)=\alpha$.
\end{proof}

\subsection{Extension class description}
Recall in section \ref{Picardofcurve}, we displayed an exact sequence of tori: $$0\to\mathbb{G}_m\to T_1\to T_2\to\mathrm{Pic}^0_{X/k}\to 0.$$ Taking character modules, we get $$0\to \mathrm{X}(\mathrm{Pic}^0_{X/k})\to \mathrm{X}(T_2)\to \mathrm{X}(T_1)\to \mathrm{X}(\mathbb{G}_{m})\to0,$$ The sequence of Galois modules defines an extension class $$\alpha_e\in\mathrm{Ext}^2_G(\mathbb{Z},\mathrm{X}(\mathrm{Pic}^0_{X/k}))=H^2(G,\mathrm{X}(\mathrm{Pic}^0_{X/k})).$$ \begin{definition}We call $\alpha_e\in \mathrm{H}^2(G,\mathrm{X}(\mathrm{Pic}^0_{X/k}))$ the extension class associated with $\Gamma$.
\end{definition}

Let's give explicit descriptions of the class $\alpha_e$.
Let $T_3$ be the image of $T_1$ in $T_2$, we have short exact sequences $$\xymatrix{0\ar[r] &\mathbb{G}_{m}\ar[r] &T_1\ar[r] &T_3\ar[r] &0} \eqno{(a)}$$ and
$$\xymatrix{0\ar[r] &T_3\ar[r] &T_2\ar[r]&\mathrm{Pic}^0_{X/k}\ar[r] &0.}\eqno{(b)}$$

Let $M$ be a $G$-module, let $C^\bullet(G,M)$ be the canonical resolution by inhomogeneous cochains $C^i(G,M)=\mathrm{Map}(G^i\to M)$. Let $\delta$ be the connecting homomorphism on cohomology groups.
\begin{proposition}\label{cal} The class $\alpha_e$ can be calculated in the following two ways.
\begin{enumerate}
\item Apply $C^\bullet(G,\mathrm{Hom}(-,\mathrm{X}(\mathrm{Pic}^0_{X/k})))$ to $(b),(a)$. Then $\alpha_e=\delta\circ\delta(1_{\mathrm{X}(\mathrm{Pic}^0_{X/k})}).$
\item Apply $C^\bullet(G,\mathrm{Hom}(\mathrm{X}(\mathbb{G}_m),-))$ to $(a),(b)$. Then $\alpha_e=\delta\circ\delta(1_{\mathrm{X}(\mathbb{G}_m)}).$
\end{enumerate}
\end{proposition}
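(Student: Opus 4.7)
The plan is to identify both descriptions with the standard recipe for computing the Yoneda class of a $2$-extension by iterated connecting homomorphisms. Write $N=\mathrm{X}(\mathrm{Pic}^0_{X/k})$ and $M=\mathrm{X}(\mathbb{G}_m)\cong\mathbb{Z}$ with trivial $G$-action. Taking characters of $(a)$ and $(b)$ (an exact contravariant functor on tori) produces short exact sequences of $G$-modules
\[
0\to \mathrm{X}(T_3)\to\mathrm{X}(T_1)\to M\to 0,\qquad
0\to N\to\mathrm{X}(T_2)\to\mathrm{X}(T_3)\to 0,
\]
whose Yoneda splice at $\mathrm{X}(T_3)$ is precisely the $4$-term sequence used to define $\alpha_e$. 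Under the Yoneda pairing $\mathrm{Ext}^1_G(\mathrm{X}(T_3),N)\otimes \mathrm{Ext}^1_G(M,\mathrm{X}(T_3))\to \mathrm{Ext}^2_G(M,N)=H^2(G,N)$ we therefore have $\alpha_e=[b]\cdot[a]$, and the task reduces to showing that both (1) and (2) compute this Yoneda product.

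For (1), I apply $\mathrm{Hom}(-,N)$ to the character sequence of $(b)$; the sequence stays exact because $\mathrm{X}(T_3)$ is $\mathbb{Z}$-free, and the connecting map $\delta\colon H^0(G,\mathrm{Hom}(N,N))\to H^1(G,\mathrm{Hom}(\mathrm{X}(T_3),N))$ sends $1_N$ to $[b]$ by the usual identification $\mathrm{Ext}^1_G(B,A)\cong H^1(G,\mathrm{Hom}(B,A))$. Next, applying $\mathrm{Hom}(-,N)$ to $(a)$, the connecting map $H^1(G,\mathrm{Hom}(\mathrm{X}(T_3),N))\to H^2(G,\mathrm{Hom}(M,N))=H^2(G,N)$ is, by a standard homological-algebra fact, cup product with $[a]\in \mathrm{Ext}^1_G(M,\mathrm{X}(T_3))$. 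Hence $\delta\circ\delta(1_N)=[b]\cdot[a]=\alpha_e$. For (2) the covariant $\mathrm{Hom}(M,-)=\mathrm{Hom}(\mathbb{Z},-)$ is literally the identity functor; applying it first to $(a)$ gives $\delta(1_M)=[a]\in H^1(G,\mathrm{X}(T_3))$, and then to $(b)$ gives a second $\delta$ which, by the same compatibility, is cup product with $[b]$. Thus $\delta\circ\delta(1_M)=[b]\cdot[a]=\alpha_e$ as well.

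The sole non-formal input is the statement that the connecting homomorphism attached to a short exact sequence agrees with Yoneda multiplication by its class in $\mathrm{Ext}^1$; this is the main (and only) obstacle, but it is standard — proved via a chase in the double complex obtained from the bar resolution $C^\bullet(G,-)$ and any $\mathbb{Z}$-free resolution of the auxiliary module — so once cited the rest of the argument consists of matching up identifications. That (1) and (2) give the same answer is then automatic: both are evaluations of the same bilinear Yoneda pairing, one approached from the right end and one from the left end of the $2$-extension.
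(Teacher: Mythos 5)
Your proof is correct and is essentially the paper's argument: the paper's own proof is the one-line remark that the Ext class of the $2$-extension can be computed by iterated connecting homomorphisms in either variable of $\mathrm{Hom}(-,-)$, which is exactly the Yoneda-splice identity $\alpha_e=[b]\cdot[a]$ you spell out. Your version just makes explicit the standard facts the paper leaves implicit (freeness of $\mathrm{X}(T_3)$ over $\mathbb{Z}$, and the compatibility of connecting maps with the Yoneda pairing).
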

\begin{proof} The Ext class can be calculated by either the first or second variable in $\mathrm{Hom}(-,-)$.\end{proof}

\begin{proposition}\label{ses}
We have a split short exact sequence $$0\to \mathrm{X}(\mathrm{Pic}^0_{X/k})\to H^0(\mathrm{Pic}^0_{X_{k'}/k'},\mathbb{G}_m){\to}\mathbb{G}_{m.k'}\to 0,$$where the first map is inclusion, and second map is evaluating at identity section $e\in\mathrm{Pic}^0_{X/k}$.
\end{proposition}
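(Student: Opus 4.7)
My plan is to exploit the fact, recalled in Section \ref{Picardofcurve}, that $\mathrm{Pic}^0_{X_{k'}/k'}$ is a split torus over $k'$ of rank $g(\Gamma)$. Choosing a $\mathbb{Z}$-basis of the character module identifies this torus with $\mathbb{G}_{m,k'}^{g(\Gamma)}$, so its ring of global sections is the Laurent polynomial ring $k'[t_1^{\pm 1},\ldots,t_{g(\Gamma)}^{\pm 1}]$ where $t_i$ are the chosen characters.

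First I would recall the standard computation that the units of a Laurent polynomial ring over a field $F$ are exactly the products $c\cdot t_1^{a_1}\cdots t_r^{a_r}$ with $c\in F^\times$ and $a_i\in\mathbb{Z}$. This can be proved directly by a highest-and-lowest-term argument in the $\mathbb{Z}^r$-grading, or invoked as Rosenlicht's lemma on units of connected linear algebraic groups. The computation gives a canonical direct sum decomposition
\[
H^0(\mathrm{Pic}^0_{X_{k'}/k'},\mathbb{G}_m)\;\cong\;k'^\times\oplus\mathrm{X}(\mathrm{Pic}^0_{X_{k'}/k'}),
\]
in which the first summand consists of the scalar units and the second consists of the characters (the monomials).

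Next I would identify each factor with the maps in the statement. Every character evaluates to $1$ at the identity $e$, while every scalar evaluates to itself, so the map ``evaluation at $e$'' is exactly the projection onto the $k'^\times$-summand of the decomposition above; this yields both exactness and a canonical splitting (via the inclusion of constants, or equivalently via $e^\ast$). The inclusion $\mathrm{X}(\mathrm{Pic}^0_{X/k})\hookrightarrow H^0(\mathrm{Pic}^0_{X_{k'}/k'},\mathbb{G}_m)$ in the statement is then the obvious inclusion of the monomial summand, using that the underlying abelian group $\mathrm{X}(\mathrm{Pic}^0_{X/k})$ coincides with $\mathrm{X}(\mathrm{Pic}^0_{X_{k'}/k'})$.

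Finally I need to verify $G$-equivariance. The inclusion of characters is $G$-equivariant by the very definition of the $G$-action on $\mathrm{X}$ recalled in Section \ref{2.1}. The identity section $e\colon\mathrm{Spec}(k)\to\mathrm{Pic}^0_{X/k}$ is defined over $k$, so its base change to $k'$ is $G$-stable, and consequently $e^\ast$ commutes with the Galois action on $k'$; this forces the decomposition itself to be a decomposition of $G$-modules and the splitting to be $G$-equivariant. The only real obstacle is the unit computation, which is classical; the rest is formal bookkeeping with the splitting.
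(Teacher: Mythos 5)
Your proposal is correct and follows essentially the same route as the paper: both reduce to the classical fact that the unit group of the Laurent polynomial ring $k'[t_1^{\pm1},\dots,t_{g}^{\pm1}]$ consists exactly of scalars times monomials, and then identify evaluation at the identity with projection onto the scalar summand. The only differences are that the paper establishes the unit lemma via a valuation-plus-Hartogs argument on $\mathbb{A}^{g}_{k'}$ rather than the graded highest-and-lowest-term argument or Rosenlicht's lemma you invoke, and that your explicit verification of $G$-equivariance (left implicit in the paper, though it is what Proposition \ref{jinj} actually needs) is a useful addition.
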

\begin{proof}Fix an isomorphism $\mathrm{Pic}^0_{X_{k'}/k'}\cong\mathbb{G}_{m,k'}^{g}\hookrightarrow\mathbb{A}^{g}_{k'}$. Suppose the coordinate on $\mathbb{A}_{k'}^{g}$ are given by $T_1,\dots, T_g$ such that $\mathrm{Pic}^0_{X_{k'}/k'}\cong\mathrm{Spec}(k'[T_1^{\pm1},\dots,T_g^{\pm1}])$.
Given any regular section of $s\in H^0(\mathrm{\mathrm{Pic}^0_{X_{k'}/k'}},\mathbb{G}_m)$ such that $s(e)=1$, let $v_i(s)$ be the valuation of $s$ along $T_i$. Then $s'=\prod T_i^{-v_i(s)}s$ is a rational section on $\mathbb{A}^g_{k'}$, regular and nonvanishing on the complement of codimension at least $2$ subset. By Hartogs's theorem, see \cite[Tag 031T]{SP}, we know $s'\in H^0(\mathbb{A}^2_{k'},\mathbb{G}_m)=\mathbb{G}_{m,k'}$. Then $s'(e)=\prod_{i}T_i(e)^{-v_i(s)}s(e)=1$ implies $s'=1$, hence $s=\prod_iT_i^{v_i(s)}$, thus the exactness in middle. The right and left exactness are obvious. The splitting map is given by pulling back scalars. \end{proof}

\begin{proposition}\label{jinj}The natural map induced by inclusion$$j\colon \mathrm{H}^2(G,\mathrm{X}(\mathrm{Pic}^0_{X/k}))\to \mathrm{H}^2(G,\mathrm{H}^0(\mathrm{Pic}^0_{X_{k'}/k'},\mathbb{G}_m))$$is an injection.
\end{proposition}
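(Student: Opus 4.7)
The strategy is to derive injectivity directly from the short exact sequence of Proposition \ref{ses}, viewed as a sequence of $G$-modules. I first need to verify that
$$0 \to \mathrm{X}(\mathrm{Pic}^0_{X/k}) \to \mathrm{H}^0(\mathrm{Pic}^0_{X_{k'}/k'}, \mathbb{G}_m) \to \mathbb{G}_{m,k'} \to 0$$
is indeed $G$-equivariant. This is essentially by construction: the inclusion of characters is $G$-equivariant by definition of the $G$-action on $\mathrm{X}(\mathrm{Pic}^0_{X/k})$, and evaluation at the identity section $e$ commutes with the Galois action because $e \in \mathrm{Pic}^0_{X/k}(k)$ is defined over $k$.

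Granting this, the cleanest finish is to take the long exact sequence of group cohomology, extracting the segment
$$\mathrm{H}^1(G, \mathbb{G}_{m,k'}) \to \mathrm{H}^2(G, \mathrm{X}(\mathrm{Pic}^0_{X/k})) \xrightarrow{j} \mathrm{H}^2(G, \mathrm{H}^0(\mathrm{Pic}^0_{X_{k'}/k'}, \mathbb{G}_m)),$$
and invoking Hilbert's Theorem 90 for the Galois extension $k'/k$: $\mathrm{H}^1(G, \mathbb{G}_{m,k'}) = \mathrm{H}^1(\mathrm{Gal}(k'/k), (k')^\times) = 0$. This forces $j$ to be injective.

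An alternative route uses the splitting already constructed in Proposition \ref{ses} via pullback of scalars. Since pullback along the $G$-equivariant structure morphism $\mathrm{Pic}^0_{X_{k'}/k'} \to \mathrm{Spec}(k')$ is manifestly $G$-equivariant, the short exact sequence actually splits as a sequence of $G$-modules, giving $\mathrm{H}^0(\mathrm{Pic}^0_{X_{k'}/k'}, \mathbb{G}_m) \cong \mathrm{X}(\mathrm{Pic}^0_{X/k}) \oplus \mathbb{G}_{m,k'}$ as $G$-modules. Applying $\mathrm{H}^2(G,-)$ then exhibits $j$ as the inclusion of a direct summand. There is no genuine obstacle in either version; the proposition is a formal consequence of Proposition \ref{ses} together with a standard vanishing, and the only minor care needed is the routine check that the maps in the sequence respect the Galois action.
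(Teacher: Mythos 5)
Your proposal is correct, and your ``alternative route'' is precisely the paper's argument: the proof given there is the one-line observation that $j$ is the inclusion of a direct summand, using the splitting by pullback of scalars from Proposition \ref{ses}. Your primary route --- taking the long exact sequence in group cohomology and killing the preceding term $\mathrm{H}^1(G,(k')^\times)$ by Hilbert's Theorem 90 --- is a genuine variant: it does not use the splitting at all, only left-exactness of the sequence together with a standard vanishing, so it would survive in situations where the sequence fails to split $G$-equivariantly. What the splitting buys in exchange is slightly more: it identifies $j$ as a split injection, i.e.\ $\mathrm{H}^2(G,\mathrm{X}(\mathrm{Pic}^0_{X/k}))$ as a direct summand of $\mathrm{H}^2(G,\mathrm{H}^0(\mathrm{Pic}^0_{X_{k'}/k'},\mathbb{G}_m))$, not merely a subgroup. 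Your preliminary check that the sequence is $G$-equivariant (the inclusion of characters by definition of the Galois action, and evaluation at $e$ because the identity section is defined over $k$) is the right routine verification, which the paper leaves implicit.
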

\begin{proof}This is inclusion of a direct summand.\end{proof}

\begin{proposition}\label{extclass2} We have $j(a_e)=a_d$.\end{proposition}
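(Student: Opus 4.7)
The plan is to exhibit matching representative $2$-cocycles for $\alpha_d$ and $j(\alpha_e)$, using compatible non-$G$-equivariant splittings of the sequences $(a),(b)$ of tori over $k'$. Since the character modules of $(a),(b)$ are short exact sequences of finite free abelian groups, both sequences split as sequences of tori over $k'$; I would fix a $k'$-group section $s\colon\mathrm{Pic}^0_{X_{k'}/k'}\to T_{2,k'}$ of $(b)$ and a $k'$-group section of $(a)$. Dually, $s$ corresponds to a retraction $r\colon X(T_2)\to M:=X(\mathrm{Pic}^0_{X/k})$ with $r|_{M}=\mathrm{id}$.

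The section $s$ produces an explicit tautological line bundle $\mathcal{L}$ on $X\times\mathrm{Pic}^0_{X_{k'}/k'}$: pull back the universal gluing line bundle on $X\times T_2$ (trivial on the normalization, glued at node $e$ by the $e$-th coordinate character $\chi_e\in X(T_2)$) along $(1_X,s)$; the resulting gluing datum at $e$ is $r(\chi_e)\in M$, viewed as a unit on $\mathrm{Pic}^0_{X_{k'}/k'}$ via the summand of Proposition \ref{ses}. For $\sigma\in G$, the same construction with $\sigma\cdot s$ gives $\sigma_Y^*\mathcal{L}$, and the difference $\sigma\cdot s-s$ is a $k'$-group morphism $\mathrm{Pic}^0_{X_{k'}/k'}\to T_{3,k'}$ since both cover the identity on $\mathrm{Pic}^0$. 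Lift this difference through $T_1\to T_3$ via the chosen section of $(a)$ to a $k'$-group morphism $\tilde\phi_\sigma\colon\mathrm{Pic}^0_{X_{k'}/k'}\to T_{1,k'}$; this lift determines an isomorphism $\phi_\sigma\colon \mathcal{L}\to\sigma_Y^*\mathcal{L}$. Unraveling the cocycle formula of section \ref{Galde} with these choices produces the $2$-cocycle $\tilde\phi_\sigma+\sigma\cdot\tilde\phi_\tau-\tilde\phi_{\sigma\tau}$, which vanishes modulo $T_3$ by the coboundary identity for $\sigma\cdot s-s$, hence lies in $\ker(T_1\to T_3)=\mathbb{G}_m$ and defines a group homomorphism $\mathrm{Pic}^0\to\mathbb{G}_m$, i.e.\ an element of $M$. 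In particular $\alpha_d\in\mathrm{Im}(j)$.

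To finish, apply Proposition \ref{cal}(1): $\alpha_e=\delta\circ\delta(1_M)$ is computed by lifting $1_M\in\mathrm{Hom}(M,M)$ to the retraction $r\in\mathrm{Hom}(X(T_2),M)$, forming the $1$-cocycle $\sigma\mapsto\sigma r-r\in\mathrm{Hom}(X(T_3),M)$, lifting it to a $1$-cochain $\tilde r_\sigma\in\mathrm{Hom}(X(T_1),M)$, and taking a final $\delta$ into $\mathrm{Hom}(\mathbb{Z},M)=M$. Under the anti-equivalence between $k'$-tori and character modules, the choices $(s,\tilde\phi_\sigma)$ correspond to $(r,\tilde r_\sigma)$, so the two $2$-cocycles coincide, proving $j(\alpha_e)=\alpha_d$. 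The main obstacle is checking that the isomorphism $\phi_\sigma$ built from the algebraic-group lift $\tilde\phi_\sigma$ really realizes the cocycle of section \ref{Galde}: in particular, one must verify that the normalization $r_\sigma(t^{-1}\otimes\sigma_Y^*t)=1$ via a rational section $t$ of $\mathcal{L}$ does not introduce extra scalar factors in $k'^{\times}$ that would push the cocycle out of the subgroup $M\subset k'^{\times}\times M=H^0(\mathrm{Pic}^0_{X_{k'}/k'},\mathbb{G}_m)$; since any two choices of $\phi_\sigma$ or $t$ alter the cocycle only by a coboundary, this is a routine but careful check.
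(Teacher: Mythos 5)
Your argument is correct and is essentially the paper's own proof: both transport the iterated connecting-homomorphism computation of $\alpha_e$ to the $\mathrm{Pic}^0_{X_{k'}/k'}$-points of the tori in $(a)$ and $(b)$, identify a lift of the tautological point with a gluing datum on the normalization for the tautological line bundle, and read off the first coboundary as the isomorphisms $\phi_\sigma$ and the second as the descent obstruction cocycle. The only (cosmetic) difference is that you insist on $k'$-group-scheme sections throughout where the paper allows arbitrary lifts in $T_{i,k'}(P)$ and instead invokes naturality of $\delta$ for the transformation $\mathrm{Hom}_{G}(\mathrm{X}(-),\mathrm{X}(\mathrm{Pic}^0_{X/k}))\to((-)_{k'}(P))^{G}$; your residual worry about scalar normalizations is indeed harmless, since $\alpha_d$ is well-defined up to coboundaries valued in $H^0(\mathrm{Pic}^0_{X_{k'}/k'},\mathbb{G}_m)$ and any choice of the $\phi_\sigma$ yields a valid representative.
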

\begin{proof}Consider the following natural transformation of functors from category of tori to abelian groups, induced by the equivalence between the category of Galois modules and tori:
$$\xymatrix{\mathrm{Hom}_{G}(\mathrm{X}(-),\mathrm{X}(\mathrm{Pic}^0_{X/k}))\ar[r]& ((-_{k'})(\mathrm{Pic}^0_{X_{k'}/k'}))^G.}$$
The class $\alpha_e$ is calculated by iterated $\delta$-homomorphisms of $1_{\mathrm{X}(\mathrm{Pic}^0_{X/k})}$ on the left hand side. Let's denote $\mathrm{Pic}^0_{X_{k'}/k'}$ by $P$. Consider the short exact sequences of $G$-modules
$$\xymatrix{0\ar[r]&\mathbb{G}_{m}(P)\ar[r] &T_{1,{k'}}(P)\ar[r]& T_{3,{k'}}(P)\ar[r]&0}\eqno{(a')}$$ and
$$\xymatrix{0\ar[r]&T_{3,k'}(P)\ar[r]& T_{2,k'}(P)\ar[r]& P(P)\ar[r]& 0.}\eqno{(b')}$$

The natural transformation commutes with $\delta$-homomorphisms. Hence in order to show $i(\alpha_e)=\alpha_d$, it suffices to show the obstruction class $\alpha_d$ is also calculated by iterated $\delta$-homomorphism on $1_{P}$, for the $C^\bullet(G,-)$ resolutions. In order to describe $1_P$, we need explain how to describe line bundles on $X\times P$. More precisely, we describe the map $T_{2,k'}(P)\to P(P).$
Given a $2|E|$-tuple: $\{r_{v,e}\}_{v\in V,e\in E(v)}$, where $ r_{v,e}\in \Gamma(P,\mathbb{G}_m).$
Take the trivial line bundle on the normalization
$X^\nu\times P$ of $X\times P$. Fix a nowhere vanishing section $s$ for the trivial line bundle. In the dual graph $\Gamma$, for each edge $e$ with endpoints $v,w$, we denote the $k'$-section corresponding to $e\in E$ on component $v$ of $X_{k'}^\nu$ by $Q_{v,e}$. We identify the trivial line bundle over $Q_{v,e}\times P$ and $Q_{w,e}\times P$ by
$r_{v,e}\cdot s|_{Q_{v,e}\times P}=r_{w,e}\cdot s|_{Q_{w,e}\times P}.$
We get a multidegree zero line bundle on $X \times P$.
By the construction, we get same line bundle if we scale $r_{v,e},r_{w,e}$ by the same function. Hence we get the map $(\prod_{e\in E}\mathbb{G}_m\times\mathbb{G}_m/\mathbb{G}_m)(P)\cong T_{2,k'}(P)\to P(P)$. This map is surjection as we may read off the gluing datum on normalization.

Fix any datum $\{r_{v,e}\}$ that represents the tautological line bundle. Let $Z=X\times P$ and $Z^\nu=X^\nu\times P$. Recall $\delta$-homomorphism is defined as taking coboundary of an inverse image. We first note that calculating the first coboundary of the lifting $\{r_{e,v}\}$ is the same as pinning down isomorphisms $\{\phi_\sigma\}$:
An isomorphism $\phi_{\sigma}\colon \mathcal{L}\cong\sigma_{Z}^*\mathcal{L}$ is determined by an isomorphism on their pullbacks to $Z^\nu$. Since $\mathcal{L}$ is trivialized on $Z^\nu$, the isomorphism $\phi_\sigma|_{Z^\nu}$ is multiplication by $u_{v,\sigma}=\sigma_P^*(r_{\sigma^{-1}v,\sigma^{-1}e})/r_{v,e}\in \mathrm{H}^0(P,\mathbb{G}_m)$ on the $v$-component\footnote{This is independent of choice of $e\in E(v)$} of $Z^\nu$.
So the isomorphism is represented by $\{u_{v,\sigma}\}\in T_{1,k'}(P)$.
Taking a second coboundary, we get cocycle $(\sigma,\tau)\mapsto\phi_{\sigma}|_{Z^\nu}\cdot \phi_{\sigma\tau}|_{Z^\nu}^{-1}\cdot\sigma_{Z^\nu}^*(\phi_{\tau}|_{Z^\nu})$. This is multiplication by elements in $\Gamma(P,\mathbb{G}_m)$ on each component. Since it is the pullback of $\phi_\sigma\cdot\phi_{\sigma\tau}^{-1}\cdot\sigma_Z^*(\phi_\tau)$ from $Z$ and $Z$ is connected, the multiplication on each component are the by same element in $\Gamma(P,\mathbb{G}_m)$, denoted by $c_{\sigma,\tau}$. We note that $(\sigma,\tau)\mapsto c_{\sigma,\tau}$ is just the descent obstruction class $\alpha_d$ as defined in section \ref{Galde}.
\end{proof}

\section{Period and index}\label{sec4}
Let $S$ be a scheme, recall for a class $\beta\in\mathrm{Br}(S)$, the period of $\beta$ is the least $n\in\mathbb{Z}_{>0}$ such that $n\beta=0$. The index of $\beta$ is the greatest common divisor of rank of Azumaya algebras representing the class. In general period always divides index, see \cite[1.3]{antieau2014}. If $S$ is a regular scheme, let $\mathrm{frac}(S)$ be the function field of $S$, then we have injection $\mathrm{Br}(\mathrm{frac}(S))\to\mathrm{Br}(S)$, they share the same period and index, see \cite[6.1]{Ant}. In this case, we will not distinguish the Brauer class in $\mathrm{Br}(S)$ or in $\mathrm{Br}(\mathrm{frac}(S))$.

\subsection{Bounds on index}
Let $X$ be a stable curve over a field $k$, with genus at least $2$. As explained in section \ref{coh}, there exists a Brauer class $\alpha_X=\mathrm{d}_{2}^{0,1}(\mathrm{Pic}^0_{X_{k}/k})\in \mathrm{Br}(\mathrm{Pic}_{X/k}^0)$, which is the obstruction class to existence of a universal line bundle on $X\times \mathrm{Pic}^0_{X/k}$.
\begin{theorem}\label{ind} The index $\mathrm{ind}(\alpha_X)$ divides $g-1$.
\end{theorem}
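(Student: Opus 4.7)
The plan is to construct, on a dense open $U\subseteq \mathrm{Pic}^0_{X/k}$, a locally free $\alpha_X$-twisted sheaf of rank $g-1$. Such a twisted sheaf yields an Azumaya algebra of rank $(g-1)^2$ on $U$ representing $\alpha_X|_U$; so $\mathrm{ind}(\alpha_X|_U)$ divides $g-1$, and by the injectivity $\mathrm{Br}(\mathrm{Pic}^0_{X/k})\hookrightarrow \mathrm{Br}(\mathrm{frac}(\mathrm{Pic}^0_{X/k}))$ recalled at the top of this section, the same divisibility will hold for $\mathrm{ind}(\alpha_X)$.

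The twisted sheaf will be $\mathcal{F}=R^1\pi_*\mathcal{P}$, where $\pi\colon X\times\mathrm{Pic}^0_{X/k}\to \mathrm{Pic}^0_{X/k}$ is the projection and $\mathcal{P}$ is an \'etale-local Poincar\'e bundle (which exists by representability of the sheafified Picard functor, section \ref{Picardofcurve}). Any two choices of $\mathcal{P}$ differ by $\pi^*M$ for a line bundle $M$ on the base, and the projection formula $R^1\pi_*(\mathcal{P}\otimes\pi^*M)=R^1\pi_*(\mathcal{P})\otimes M$ ensures that the local sheaves $R^1\pi_*\mathcal{P}$ patch, along the very gerbe cocycle defining $\alpha_X$, into a well-defined global $\alpha_X$-twisted coherent sheaf $\mathcal{F}$ on $\mathrm{Pic}^0_{X/k}$. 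For the open $U$, I would take $U=\mathrm{Pic}^0_{X/k}\setminus\{\mathcal{O}_X\}$: any non-zero global section of a multidegree-zero $L$ on $X_{k^s}$ realizes $L$ as an effective divisor of multidegree zero, which on a connected reduced curve forces $L=\mathcal{O}_{X_{k^s}}$; hence $h^0(L)=0$ on $U$, and Riemann--Roch gives $h^1(L)=g-1$ there. Cohomology and base change, together with the vanishing of $R^2\pi_*$ for one-dimensional fibres, then makes $\mathcal{F}|_U$ locally free of rank $g-1$.

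The step I expect to be most delicate is the bookkeeping that the assembled $\mathcal{F}$ is genuinely $\alpha_X$-twisted in the sense that the $\mathrm{PGL}_{g-1}$-torsor on $U$ determined by the projectivization of $\mathcal{F}|_U$ has Brauer class exactly $\alpha_X|_U$ (so that the resulting Azumaya algebra indeed represents $\alpha_X|_U$, and not some other class of the same period). This amounts to tracking the gerbe cocycle produced by the non-canonicity of $\mathcal{P}$ through the formation of $R^1\pi_*$, and is essentially formal once one adopts the twisted-sheaf perspective already implicit in section \ref{Galde}; beyond this verification and the identification of $U$, the argument is short.
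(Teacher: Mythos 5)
Your argument is correct, and it reaches the bound by what is essentially the Serre-dual of the paper's construction rather than the same one. The paper identifies $\mathrm{Pic}^0_{X/k}$ with $\mathrm{Pic}^{2g-2}_{X/k}$ by twisting with $\omega_X$, deletes the single point $\omega_X$, and cites \cite[V.4]{Giraud} for the fact that the scheme of effective divisors $\mathrm{Div}^{2g-2}_{X/k}\to S$ is a Brauer--Severi scheme of relative dimension $g-2$ representing $\alpha_X$ up to sign; its fibre over $L$ is $\mathbb{P}(\mathrm{H}^0(X,L))$, with $h^0(L)=g-1$ off the canonical point by Riemann--Roch. You instead stay on $\mathrm{Pic}^0_{X/k}$, delete $\mathcal{O}_X$, and build the rank-$(g-1)$ twisted sheaf $R^1\pi_*\mathcal{P}$; Serre duality $\mathrm{H}^1(L)^\vee\cong \mathrm{H}^0(\omega_X\otimes L^{-1})$ shows your object is dual to the paper's under the inversion map on the torus. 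What your route buys is independence from the citation to Giraud; what it costs is carrying out yourself the cocycle bookkeeping identifying the gluing obstruction for $R^1\pi_*\mathcal{P}$ with $\mathrm{d}_2^{0,1}(1)$ --- this is standard, and the residual sign ambiguity is harmless since $\mathrm{ind}(\alpha)=\mathrm{ind}(-\alpha)$, exactly as the paper's own ``up to a sign'' is harmless. One small imprecision to fix: a non-zero section of a multidegree-zero $L$ on a reducible nodal curve need not cut out a Cartier divisor, since it could vanish identically on a component; the right argument is that on any component where the section is not identically zero it is nowhere vanishing (the restriction has degree zero on the normalization), and connectedness forces the value at each node to be non-zero on the adjacent component as well, so the section is nowhere vanishing and trivializes $L$. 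With that repair, $h^0(L)=0$ on $U=\mathrm{Pic}^0_{X/k}\setminus\{\mathcal{O}_X\}$ and the rest of your argument goes through.
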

\begin{proof}Note there is a canonical isomorphism $\mathrm{Pic}^0_{X/k}\cong\mathrm{Pic}^{2g-2}_{X/k}$
by tensoring with canonical line bundle. Let $S=\mathrm{Pic}^{2g-2}_{X/k}\backslash\{\omega_C\}$. By \cite[V.4]{Giraud}, up to a sign, the Brauer class $\alpha_X$ is represented by class of the Brauer-Severi scheme $\mathrm{Div}_{X/k}^{2g-2}$ over $S$, which has relative dimension $g-2$.
\end{proof}
For totally degenerate nodal curves, we have extra criterion. Let's work over a fixed field $k_0$. Let $\Gamma$ be graph, such that each vertex has degree at least $4$. Let $X/k$ be the universal curve. Let $\alpha$ be the Brauer class associated with graph $\Gamma$. Suppose $\Gamma_0$ is an $\mathrm{Aut}(\Gamma)$-invariant subgraph, with $v_0$ vertices and $e_0$ edges.
\begin{theorem}\label{extra1} The index $\mathrm{ind}(\alpha)$ divides $e_0$ and $2v_0$.
\end{theorem}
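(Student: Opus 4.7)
The plan is to bound the index by exhibiting, in each case, an $\alpha$-twisted locally free sheaf on $\mathrm{Pic}^0_{X/k}$ of the desired rank; recall that $\mathrm{ind}(\alpha)\mid n$ if and only if such a rank-$n$ sheaf exists. By the definition of $\alpha$ in Section~\ref{coh}, there is a tautological $\alpha$-twisted line bundle $\mathcal{L}$ on $X\times_k\mathrm{Pic}^0_{X/k}$, i.e., a genuine line bundle on the $\mathbb{G}_m$-gerbe over $X\times\mathrm{Pic}^0_{X/k}$ whose class is the pullback of $\alpha$. For any closed $k$-subscheme $Z\subset X$ finite of degree $d$ over $\mathrm{Spec}(k)$, the projection $Z\times_k\mathrm{Pic}^0_{X/k}\to\mathrm{Pic}^0_{X/k}$ is finite flat of degree $d$, and the pushforward of $\mathcal{L}|_{Z\times\mathrm{Pic}^0_{X/k}}$ along it is an $\alpha$-twisted locally free sheaf of rank $d$. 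It therefore suffices to produce such $Z$ of degrees $e_0$ and $2v_0$.

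For the $e_0$ bound: the $G$-invariance of $E(\Gamma_0)$ implies that the $e_0$ nodes of $X_{k'}$ labelled by $E(\Gamma_0)$ form a $G$-stable closed subset that descends to a closed $k$-subscheme $N_0\subset X$ finite of degree $e_0$ over $\mathrm{Spec}(k)$; apply the construction with $Z=N_0$.

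For the $2v_0$ bound, pass to the normalization $\nu\colon X^\nu\to X$, which descends over $k$ since normalization commutes with separable base change. Each $G$-orbit $\mathcal{O}\subseteq V_0$ corresponds to a connected component $Y_\mathcal{O}$ of $X^\nu$: a smooth projective $k$-curve whose geometric components are the $|\mathcal{O}|$ copies of $\mathbb{P}^1$ indexed by $\mathcal{O}$. The anticanonical sheaf $\omega^{-1}_{Y_\mathcal{O}/k}$ has $k$-degree $2|\mathcal{O}|$, and $H^0(Y_\mathcal{O},\omega^{-1}_{Y_\mathcal{O}/k})$ is a $k$-vector space of dimension $3|\mathcal{O}|$; the vanishing locus of a generic global section is an effective anticanonical divisor, i.e., a closed $k$-subscheme of $Y_\mathcal{O}$ of degree $2|\mathcal{O}|$ disjoint from the finite preimage of the nodes under $\nu$. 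Summing over $\mathcal{O}\subseteq V_0$ yields a closed $k$-subscheme of $X^\nu$ of degree $\sum_{\mathcal{O}}2|\mathcal{O}|=2v_0$, whose image $Z\subset X$ under $\nu$ is finite of degree $2v_0$. The main technical point---that the resulting pushforward is indeed locally free of the claimed rank---is routine: it follows from flatness of $Z\to\mathrm{Spec}(k)$ (automatic, as $Z$ is finite over the field $k$) together with the fact that $\mathcal{L}$ restricts to a line bundle on $Z\times\mathrm{Pic}^0_{X/k}$.
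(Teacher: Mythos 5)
Your proof is correct, and the geometric input coincides with the paper's: both arguments come down to producing finite closed $k$-subschemes of $X$ of degree $e_0$ (the descended set of nodes indexed by $E(\Gamma_0)$) and of degree $2v_0$ (degree-two subschemes of the conics that form the connected components of the normalization of the descended curve $X_0$ --- your generic anticanonical divisors are exactly the paper's degree-$2\deg(Q_i)$ points on those conics). Where you genuinely differ is the mechanism converting a degree-$d$ subscheme $Z\subset X$ into the divisibility $\mathrm{ind}(\alpha)\mid d$. The paper works one closed point $P_i\in Z$ at a time: $X_{\kappa(P_i)}$ acquires a rational point, so the tautological bundle exists after base change to $\kappa(P_i)$, hence $\alpha$ is split by a degree-$[\kappa(P_i):k]$ extension of the function field of $\mathrm{Pic}^0_{X/k}$, and \cite[4.5.8]{GS} gives $\mathrm{ind}(\alpha)\mid[\kappa(P_i):k]$; summing over $i$ yields the bound. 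You instead invoke the gerbe-theoretic interpretation of $\mathrm{d}_2^{0,1}$ to obtain a $\mathrm{pr}_2^*\alpha$-twisted tautological line bundle and push it forward along the finite flat degree-$d$ projection $Z\times\mathrm{Pic}^0_{X/k}\to\mathrm{Pic}^0_{X/k}$, manufacturing an $\alpha$-twisted locally free sheaf of rank $d$ and hence an Azumaya algebra of degree $d$ in the class. Your route is more constructive and never passes to the generic point, at the cost of importing the twisted-sheaf formalism; the one step you assert rather than prove --- the existence of the twisted tautological bundle --- is standard (Giraud's gerbe of liftings) but is not literally ``the definition of $\alpha$'' as you claim. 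Note finally that the paper's pointwise version is in principle sharper, since it shows $\mathrm{ind}(\alpha)$ divides each $[\kappa(P_i):k]$ and each $2\deg(Q_i)$ individually, hence their greatest common divisor rather than merely the sums $e_0$ and $2v_0$; for the statement as given, both conclusions suffice.
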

\begin{proof}The nodes corresponding to edges in $\Gamma_0$ descend to a union of closed points $P_1\cup\dots\cup P_s$ over $k$. Let $\kappa(P_i)$ be the fraction field of $P_i$. Since $X_{\kappa(P_i)}$ has a $\kappa(P_i)$-rational point, the universal line bundle exist over $X_{\kappa(P_i)}\times\mathrm{Pic}^0_{X_{\kappa(P_i)}/{\kappa(P_i)}}$, hence the Brauer class is zero in $\mathrm{Br}(\mathrm{Pic}^0_{X/k}\times_k\kappa(P_i))$. By \cite[4.5.8]{GS}, we know $\mathrm{ind}(\alpha)|[\kappa(P_i)\colon k]$ for each $i$, thus $\mathrm{ind}(\alpha)|e_0$.

The components corresponding to vertices in $\Gamma_0$ descend to an integral scheme $X_0$ over $k$. Let $X_0^\nu$ be the normalization of $X_0$, let $X_0^\nu\overset{a}{\to}S_0\overset{b}{\to}\mathrm{Spec}(k)$ be the Stein factorization, see \cite[Tag 03GX]{SP}, where $a$ is a conic bundle and $b$ is a union of closed points $Q_0\cup\dots\cup Q_r$. Since $X_0^\nu\to S_0$ is a conic bundle, for each $i$, $X_0^\nu$ has infinitely many degree $2\mathrm{deg}(Q_i)$ points over $k$, hence does $X_0$ and $X$. Then we argue as in the last theorem.
\end{proof}

\subsection{Bounds on period}\label{sec4}
Let $\Gamma$ be a graph, such that each vertex has degree at least $4$. Let $X/k$ be the universal curve associated with graph $\Gamma$. Let $G=\mathrm{Aut}(\Gamma)$.
Recall that $\mathrm{X}(\mathrm{Pic}^0_{X/k})$ can be identified with $\mathrm{H}_1(\Gamma,\mathbb{Z})$ as $G$-module, see section \ref{homology}.

Let $\sigma$ be a period $m$ element in $G$. Let $L$ be a loop in $\Gamma$ with no repeated vertices. Suppose $L$ is fixed by $\sigma$ and we can write $L=\cup_{i=0}^{m-1}\sigma^i(ve)$, where $v$ is a vertex in $L$ and $e$ is the union of edges connecting $v$ and $\sigma(v)$. Let $\langle\sigma\rangle\subset G$ be the subgroup generated by $\sigma$.

\begin{theorem}\label{thm} View $\mathrm{H}_1(\Gamma,\mathbb{Z})$ as a $\langle\sigma\rangle$-module. If the submodule generated by $L$ is a direct summand, then $m$ divides $\mathrm{per}(\alpha)$.
\end{theorem}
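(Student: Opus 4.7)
The plan is to reduce the problem from $\alpha$ to the extension class $\alpha_e$, then to an explicitly computable class in $H^2(\langle\sigma\rangle,\mathbb{Z})$. By Propositions \ref{iinj}, \ref{jinj}, \ref{extclass1}, and \ref{extclass2}, the composition $i\circ j$ is an injection sending $\alpha_e \mapsto \alpha$, so $\mathrm{per}(\alpha) = \mathrm{per}(\alpha_e)$. Let $H = \langle\sigma\rangle \subset G$, of order $m$; restriction $\mathrm{res}^G_H\alpha_e \in H^2(H,H_1(\Gamma,\mathbb{Z}))$ has period dividing $\mathrm{per}(\alpha_e)$. By the direct-summand hypothesis, the inclusion $\iota\colon \langle L\rangle \hookrightarrow \mathrm{X}(\mathrm{Pic}^0_{X/k}) \cong H_1(\Gamma,\mathbb{Z})$ splits as $H$-modules, and $\langle L\rangle \cong \mathbb{Z}$ carries the trivial $H$-action since $\sigma$ fixes $L$; let $p$ be a splitting. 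Then $p_*(\mathrm{res}^G_H\alpha_e) \in H^2(H,\mathbb{Z})$ has period dividing $\mathrm{per}(\alpha_e)$, so it suffices to show this class has order $m$ in $H^2(H,\mathbb{Z}) \cong \mathbb{Z}/m$.

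By the construction in Section \ref{Picardofcurve} and Proposition \ref{homology}, $\alpha_e$ is the class of the 2-extension
\begin{equation*}
0 \to H_1(\Gamma,\mathbb{Z}) \to \mathbb{Z}^E \to \mathbb{Z}^V \to \mathbb{Z} \to 0,
\end{equation*}
which is the reduced simplicial chain complex of $\Gamma$. Since $L$ is $H$-invariant, the analogous chain complex of $L$ includes $H$-equivariantly into the one above, with the identity on the rightmost $\mathbb{Z}$ and with $\iota$ on the leftmost term. Splitting each 2-extension at the image of its central map and using the pushout description of the pushforward of an Ext-class, this morphism of 2-extensions yields $\mathrm{res}^G_H\alpha_e = \iota_*(\beta_L)$, where $\beta_L \in H^2(H,\langle L\rangle) = H^2(H,\mathbb{Z})$ denotes the class of the $L$-chain complex. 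Since $p\circ\iota = \mathrm{id}$, functoriality gives $p_*\,\mathrm{res}^G_H\alpha_e = \beta_L$.

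Finally, $H$ acts freely transitively on both $V(L)$ and $E(L)$, so fixing a base vertex identifies $\mathbb{Z}^{V(L)} \cong \mathbb{Z}^{E(L)} \cong \mathbb{Z}[H]$ as $H$-modules, and the $L$-chain 2-extension takes the familiar form
\begin{equation*}
0 \to \mathbb{Z} \xrightarrow{N} \mathbb{Z}[H] \xrightarrow{1-t} \mathbb{Z}[H] \xrightarrow{\varepsilon} \mathbb{Z} \to 0,
\end{equation*}
where $t$ generates $H$, $N = 1+t+\cdots+t^{m-1}$ is the norm, and $\varepsilon$ is the augmentation. This is the first two steps of the classical 2-periodic projective $\mathbb{Z}[H]$-resolution of $\mathbb{Z}$; applying $\mathrm{Hom}_{\mathbb{Z}[H]}(-,\mathbb{Z})$ and computing shows that its class generates $\mathrm{Ext}^2_{\mathbb{Z}[H]}(\mathbb{Z},\mathbb{Z}) = H^2(H,\mathbb{Z}) \cong \mathbb{Z}/m$. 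Hence $\mathrm{ord}(\beta_L) = m$, and $m \mid \mathrm{per}(\alpha)$.

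The step I expect to be trickiest is the Yoneda identification $\mathrm{res}^G_H\alpha_e = \iota_*(\beta_L)$ in the second paragraph: one has to split each 4-term exact sequence as a composite of two short exact sequences at the image of the middle map, and then verify directly that a chain map between such 2-extensions whose right-hand vertical arrow is the identity witnesses a pushforward of Ext-classes along the left-hand vertical arrow. The remaining ingredients are routine functoriality of period under restriction and projection, and a standard cohomology calculation for cyclic groups.
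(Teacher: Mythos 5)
Your overall strategy is the same as the paper's: use the injectivity of $i\circ j$ (Propositions \ref{iinj}, \ref{jinj}) together with $i(j(\alpha_e))=\alpha$ to replace $\alpha$ by $\alpha_e$, restrict to $H=\langle\sigma\rangle$, project onto the direct summand $\mathbb{Z}\cdot L$, and show the resulting class generates $\mathrm{H}^2(H,\mathbb{Z})\cong\mathbb{Z}/m$. Your treatment of the last step differs in flavor from the paper's: where the paper computes an explicit double coboundary $\delta\circ\delta(1_{\mathrm{X}(\mathbb{G}_m)})$ using Proposition \ref{cal}, you identify $\alpha_e$ with the Yoneda class of the reduced simplicial chain complex of $\Gamma$ and realize $p_*\,\mathrm{res}^G_H\alpha_e$ as the class $\beta_L$ of the subcomplex of $L$ via a morphism of $2$-extensions. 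That identification, and the reduction $p_*\iota_*=\mathrm{id}$, are correct and arguably cleaner than a cocycle computation.

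However, there is a genuine gap in the final computation. You assert that $H$ acts freely \emph{transitively} on $V(L)$ and $E(L)$, which is not implied by the hypotheses: the setup only requires $L=\bigcup_{i=0}^{m-1}\sigma^i(ve)$ where $e$ is the path in $L$ joining $v$ to $\sigma(v)$, and this path may consist of $d>1$ edges. In that case $|V(L)|=|E(L)|=dm$, so $\mathbb{Z}^{V(L)}\cong\mathbb{Z}^{E(L)}\cong\mathbb{Z}[H]^{d}$ rather than $\mathbb{Z}[H]$, and your identification of the $L$-complex with $0\to\mathbb{Z}\xrightarrow{N}\mathbb{Z}[H]\xrightarrow{1-t}\mathbb{Z}[H]\xrightarrow{\varepsilon}\mathbb{Z}\to0$ fails. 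This is not a vacuous generality: the paper applies Theorem \ref{thm} in exactly this situation, e.g.\ to a hexagon carrying an order-$3$ rotation in Example \ref{Example3} (there $d=2$). So as written your proof only covers the case where $\sigma$ acts transitively on the vertices of $L$. The repair is routine but must be done: either run the paper's cocycle argument, lifting the $1$-cocycle $\sigma^i\mapsto\chi_{\sigma^i(v)}-\chi_v$ to the $1$-chain $e+\sigma(e)+\cdots+\sigma^{i-1}(e)$ and checking that the resulting $2$-cocycle is $[L]$ times the standard generator of $\mathrm{H}^2(\mathbb{Z}/m,\mathbb{Z})$; or observe that the middle terms of the $L$-complex are still free $\mathbb{Z}[H]$-modules and exhibit an explicit chain map comparing it with the standard $2$-periodic resolution. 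Without one of these, the claim $\mathrm{ord}(\beta_L)=m$ is unproved in the generality the theorem requires.
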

\begin{proof} Consider the restriction map $\mathrm{H}^2(G,\mathrm{X}(\mathrm{Pic}^0_{X/k}))\to \mathrm{H}^2(\langle\sigma\rangle,\mathrm{X}(\mathrm{Pic}^0_{X/k}))$.
Since $\mathbb{Z}\cdot L$ is a summand of $\mathrm{X}(\mathrm{Pic}_{X/k}^0)$, we have projection map $\mathrm{H}^2(\langle\sigma\rangle,\mathrm{X}(\mathrm{Pic}^0_{X/k}))\to H^2(\langle\sigma\rangle,\mathbb{Z})$. By Proposition \ref{iinj} and \ref{jinj}, we know $i\circ j\colon \mathrm{H}^2(G,\mathrm{X}(\mathrm{Pic}^0_{X/k}))\to \mathrm{H}^2(\mathrm{Pic}^0_{X/k},\mathbb{G}_m)$ is injection,
thus it suffices to show $\alpha_e$ maps to a generator in $H^2(\langle\sigma\rangle,\mathbb{Z})\cong\mathbb{Z}/m\mathbb{Z}.$

We use $\delta\circ\delta(1_{\mathrm{X}(\mathbb{G}_m)})$ to calculate $\alpha_e$, see Proposition \ref{cal}. An inverse image of $1\in \mathrm{X}(\mathbb{G}_m)$ in $\mathrm{X}(T_1)\cong\mathbb{Z}^{\oplus|V|}$ can be taken to be characteristic function $\chi_{v}$. Take differential we get $1$-cocycle $\phi\colon \sigma^i\mapsto \chi_{\sigma^i(v)}-\chi_{v}$ in $\mathrm{X}(T_3)$. We take an inverse image of $\phi$ in $\mathrm{X}(T_2)=\mathbb{Z}^{\oplus|E|}$. For example, $\sigma^i\mapsto \sigma^0(e)+\dots+\sigma^{i-1}(e)$. Taking differential again, the $2$-cocycle can be written as $(\sigma^i,\sigma^j)\mapsto c_{i,j}$, where $c_{i,j}=0$ if $i+j<n$, $c_{i,j}=1\cdot L$ if $i+j\geq n$. Thus image of $\alpha$ in $\mathrm{H}^2(\langle\sigma\rangle,\mathbb{Z})\cong\mathbb{Z}/m\mathbb{Z}$ is the canonical generator, hence $m|\mathrm{per}(\alpha)$.\end{proof}
\begin{theorem}\label{simpleest} The period $\mathrm{per}(\alpha)$ divides $|G|=|\mathrm{Aut}(\Gamma)|$.
\end{theorem}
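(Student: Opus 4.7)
The plan is to reduce the statement to a standard annihilation theorem in finite group cohomology via the injections established in Section \ref{sec3}. By Propositions \ref{iinj} and \ref{jinj}, the composition
\[
i \circ j \colon \mathrm{H}^2(G, \mathrm{X}(\mathrm{Pic}^0_{X/k})) \longrightarrow \mathrm{H}^2(\mathrm{Pic}^0_{X/k}, \mathbb{G}_m)
\]
is injective, and by Propositions \ref{extclass1} and \ref{extclass2} it sends the extension class $\alpha_e$ to $\alpha$. Since an injective group homomorphism preserves the order of each element, it suffices to show that $|G| \cdot \alpha_e = 0$ in $\mathrm{H}^2(G, \mathrm{X}(\mathrm{Pic}^0_{X/k}))$.

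For this I would invoke the classical fact that for a finite group $G$ and any $G$-module $M$, the group $\mathrm{H}^n(G, M)$ is annihilated by $|G|$ for all $n \geq 1$. The standard argument is that the composition $\mathrm{cor} \circ \mathrm{res}$ from $\mathrm{H}^n(G, M)$ to $\mathrm{H}^n(\{1\}, M) = 0$ and back equals multiplication by the index $[G:1] = |G|$, forcing $|G|$ to kill every class. Applied to $M = \mathrm{X}(\mathrm{Pic}^0_{X/k})$ and $n = 2$, this yields $|G| \cdot \alpha_e = 0$, whence $\mathrm{per}(\alpha) = \mathrm{per}(\alpha_e)$ divides $|G|$.

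There is essentially no obstacle here: the entire content is the translation from the Brauer-theoretic side to the Galois-cohomological side, which has already been carried out in Section \ref{sec3}, together with a citation-level invocation of the restriction-corestriction identity. I would therefore write the proof as a short two-sentence argument, referencing the earlier propositions for the injectivity of $i \circ j$ and the identification $(i \circ j)(\alpha_e) = \alpha$, and citing a standard reference (e.g.\ \cite[III.10]{Mil} or Brown's \emph{Cohomology of Groups}) for the annihilation of $\mathrm{H}^2(G, -)$ by $|G|$.
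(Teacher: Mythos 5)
Your proposal is correct and is essentially the paper's own argument: the paper's proof likewise observes that $\alpha$ is the image of $\alpha_e$ and that $|\mathrm{Aut}(\Gamma)|$ annihilates $\alpha_e$ in group cohomology via restriction--corestriction. (Note that injectivity of $i\circ j$ is not even needed for the divisibility claim; the fact that $\alpha$ is the image of $\alpha_e$ under a group homomorphism already gives $|G|\cdot\alpha=0$.)
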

\begin{proof} Note $\alpha$ is the image of $\alpha_e$, and $|\mathrm{Aut}(\Gamma)|$ annihilates $\alpha_e$ in group cohomology.
\end{proof}

\subsection{Functorial property}
Let $k$ be a field, let $f\colon X\to Y$ be a morphism of proper geometrically connected $k$-curves. Let $\phi\colon\mathrm{Pic}^0_{Y/k}\to\mathrm{Pic}^0_{X/k}$ be the natural morphism induced by pullback. Consider the following commutative diagram
\[\xymatrix{
  Y\times\mathrm{Pic}^0_{Y/k}\ar[d]^{p_1} &X\times\mathrm{Pic}^0_{Y/k} \ar[l]_{f\times1}\ar[r]^{1\times\phi}\ar[d]^{p_2}& X\times\mathrm{Pic}^0_{X/k}\ar[d]^{p_3} \\%
\mathrm{Pic}^0_{Y/k} & \mathrm{Pic}^0_{Y/k}\ar[r]^{\phi}\ar@{=}[l]& \mathrm{Pic}^0_{X/k}
}\]

Consider the Leray spectral sequence associated to the projections in the columns and the sheaf $\mathbb{G}_m$. By functoriality of Leray spectral sequence, we have the following commutative diagram, where the map $\mathrm{d}_{2}^{0,1}$s are denoted by $\delta_i$:

\[\xymatrix{
 \mathrm{Pic}^0_{Y/k}(\mathrm{Pic}^0_{Y/k})\ar[d]^{\delta_1}\ar[r]^{\phi\circ-} &\mathrm{Pic}^0_{X/k}(\mathrm{Pic}^0_{Y/k})\ar[d]^{\delta_2}& \mathrm{Pic}^0_{X/k}(\mathrm{Pic}^0_{X/k})\ar[d]^{\delta_3}\ar[l]_{-\circ\phi} \\%
\mathrm{H}^2(\mathrm{Pic}^0_{Y/k},\mathbb{G}_m)\ar[r]^{\sim} & \mathrm{H}^2(\mathrm{Pic}^0_{Y/k},\mathbb{G}_m) & \mathrm{H}^2(\mathrm{Pic}^0_{X/k},\mathbb{G}_m)\ar[l]_{\phi^*}
}\]

\begin{theorem}\label{thmpull} Let $\alpha_X=\delta_3(1_{\mathrm{Pic}^0_{X/k}})$, $\alpha_Y=\delta_1(1_{\mathrm{Pic}^0_{Y/k}})$. Then
$\phi^*(\alpha_X)=\delta_2(\phi)=\alpha_Y$.
\end{theorem}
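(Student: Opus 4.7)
The plan is a direct diagram chase in the two commutative squares displayed immediately before the theorem; no additional construction is needed. The key observation is that the element $\phi \in \mathrm{Pic}^0_{X/k}(\mathrm{Pic}^0_{Y/k})$ sitting in the middle of the top row equals both $\phi \circ 1_{\mathrm{Pic}^0_{Y/k}}$ and $1_{\mathrm{Pic}^0_{X/k}} \circ \phi$, so $1_{\mathrm{Pic}^0_{Y/k}}$ and $1_{\mathrm{Pic}^0_{X/k}}$ both map to $\phi$ under the two horizontal arrows. Commutativity of the right-hand square then gives
\[
\phi^*(\alpha_X) \;=\; \phi^*\delta_3(1_{\mathrm{Pic}^0_{X/k}}) \;=\; \delta_2(\phi),
\]
and commutativity of the left-hand square gives
\[
\alpha_Y \;=\; \delta_1(1_{\mathrm{Pic}^0_{Y/k}}) \;=\; \delta_2(\phi),
\]
where I use that the bottom horizontal arrow of the left square is tautologically the identity on $\mathrm{H}^2(\mathrm{Pic}^0_{Y/k},\mathbb{G}_m)$. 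Chaining these equalities yields $\phi^*(\alpha_X) = \delta_2(\phi) = \alpha_Y$, as required.

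The only substantive content is the commutativity of the two squares, which the paper already records and which I would take as given. If pressed to justify it, I would appeal to the functoriality of the Leray spectral sequence: for the right square, the morphism of projections $(1\times\phi,\phi)$ from $p_2$ to $p_3$ induces a morphism of Leray spectral sequences, which on $H^2$ of the base recovers $\phi^*$ and on the $E_2^{0,1}$ edge recovers $-\circ\phi$ (the latter because pullback of a universal line bundle along $1\times\phi$ represents the family obtained by precomposing the classifying morphism with $\phi$); for the left square, the morphism $(f\times 1,\mathrm{id})$ from $p_2$ to $p_1$ has identity base component, so induces the identity on $H^2$ of the base, while on $E_2^{0,1}$ it recovers pullback of line bundles along $f$, which is exactly $\phi\circ-$ under the description of the Picard functor. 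The main obstacle, such as it is, lies entirely in this identification of spectral-sequence edge maps with Picard-functor maps; once the diagram is accepted, the theorem is a one-line consequence of the fact that the same element $\phi$ sits under both identities.
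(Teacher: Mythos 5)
Your argument is correct and is essentially identical to the paper's, which likewise deduces the theorem directly from the displayed commutative diagram (itself justified by functoriality of the Leray spectral sequence). The extra detail you supply on why the squares commute is consistent with, and slightly more explicit than, what the paper records.
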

\begin{proof}This follows from the above commutative diagram.\end{proof}

Let $\Gamma_1$ be a graph, let $\Gamma_2$ be an $\mathrm{Aut}(\Gamma_1)$-stable subgraph, such that in each graph, all vertices has degree at least $4$. We work over a fixed field $k_0$. Let $X_i/k_i$ be universal curve of $\Gamma_i$.

\begin{lemma} There exists a map $t\colon \mathrm{Spec}(k_1)\to\mathrm{Spec}(k_2)$ and a commutative diagram
\[\xymatrix{X_1\ar[d]&(X_2)_{k_1}\ar[l]\ar[r]\ar[d] & X_2\ar[d]\\
\mathrm{Spec}(k_1)&\mathrm{Spec}(k_1)\ar[l]_{\sim}\ar[r]^-t &\mathrm{Spec}(k_2)}\] where the right square is cartesian, the left square is a composition of partial normalization and closed immersion.
\end{lemma}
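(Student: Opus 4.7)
The plan is to build the left square of the diagram ``by hand'' inside $X_1$, and then to obtain $t$ and the cartesian right square from the moduli-theoretic interpretation of $X_2$. First, inside $X_1/k_1$, I would define $Y \subseteq X_1$ as the closed subscheme given by the union of the irreducible components of $X_{1,k_1^s}$ indexed by vertices of $V(\Gamma_2)$; by the $\mathrm{Aut}(\Gamma_1)$-stability of $\Gamma_2$ this union is $\mathrm{Gal}(k_1^s/k_1)$-stable, so $Y$ descends to $k_1$. The nodes of $Y$ correspond to edges of $\Gamma_1$ whose endpoints both lie in $V(\Gamma_2)$; let $Y^\nu \to Y$ be the partial normalization at precisely those nodes lying outside $E(\Gamma_2)$ (again a Galois-stable set). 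The degree-$\geq 4$ hypothesis on $\Gamma_2$ ensures that each component of $Y^\nu$ retains $\geq 4$ nodes, so $Y^\nu/k_1$ is a totally degenerate stable curve with dual graph $\Gamma_2$, and by construction the composite $Y^\nu \to Y \hookrightarrow X_1$ is a partial normalization followed by a closed immersion.

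To produce $t$, I would interpret the construction $X_1 \rightsquigarrow Y^\nu$ in families over $\mathcal{M}_{\Gamma_1}$. The forgetful morphisms $\mathcal{M}_{0, E_{\Gamma_1}(v)} \to \mathcal{M}_{0, E_{\Gamma_2}(v)}$ for $v \in V(\Gamma_2)$, composed with the projection away from the factors indexed by $V(\Gamma_1) \setminus V(\Gamma_2)$, assemble into a smooth surjective (hence dominant) morphism $f \colon \mathcal{M}_{\Gamma_1} \to \mathcal{M}_{\Gamma_2}$. Since $\Gamma_2$ is $\mathrm{Aut}(\Gamma_1)$-stable, there is a restriction homomorphism $\mathrm{Aut}(\Gamma_1) \to \mathrm{Aut}(\Gamma_2)$ and $f$ is equivariant for it, so $f$ descends to a dominant rational map $D_{\Gamma_1}^\circ \dashrightarrow D_{\Gamma_2}^\circ$. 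Taking generic points gives a field inclusion $k_2 \hookrightarrow k_1$, which is the required $t$.

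Finally, the identification $Y^\nu \cong (X_2)_{k_1}$ that makes the right square cartesian uses the description of the universal family over $\mathcal{M}_\Gamma$ from \cite[XII.10]{ACGH}: each component is pulled back from the universal marked $\mathbb{P}^1$ over a single factor $\mathcal{M}_{0,E(v)}$, and the gluings are prescribed by the $E(\Gamma)$-indexed marked sections. I would verify component by component that pullback under $f$ of the components of $X_2$ agrees with the components of $Y^\nu$, and that the gluings along $E(\Gamma_2)$ match on both sides. This compatibility check is the main technical step, and it reduces to the observation that the forgetful morphism $\mathcal{M}_{0, E_{\Gamma_1}(v)} \to \mathcal{M}_{0, E_{\Gamma_2}(v)}$ pulls the universal marked $\mathbb{P}^1$ on the target back to the universal marked $\mathbb{P}^1$ on the source (now carrying extra marked sections). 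Once this is recorded, the commutativity of the whole diagram and the cartesianness of the right square follow.
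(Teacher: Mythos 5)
Your proposal is correct and follows essentially the same route as the paper: both rest on the $\mathrm{Aut}$-equivariant forgetful morphism $\mathcal{M}_{\Gamma_1}\to\mathcal{M}_{\Gamma_2}$ (giving $t$ after descent and passage to generic points) together with the partial-normalization-plus-closed-immersion comparison of universal families. The only difference is presentational --- you build the subcurve $Y^\nu\subseteq X_1$ by Galois descent over $k_1$ first and then match it with $(X_2)_{k_1}$, whereas the paper performs the whole construction at the level of universal families over the moduli stacks and takes generic fibers at the very end.
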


\begin{proof}Note that we have natural surjective morphism $$\mathrm{Fgt}\colon \mathcal{M}_{\Gamma_1}=\prod_{v\in V_1}\mathcal{M}_{0,E_1(v)}\to\prod_{v\in V_2}\mathcal{M}_{0,E_2(v)}\cong \mathcal{M}_{\Gamma_2}$$ induced by forgetting vertices and forgetting nodes. Note that there exists universal family of curves with marked nodes $\mathcal{Y}_i\to {\mathcal{M}}_{\Gamma_i}$, and there exist a morphism $\mu\colon\mathcal{Y}_2\times_{{\mathcal{M}}_{\Gamma_2}}{\mathcal{M}}_{\Gamma_1}\to\mathcal{Y}_1$ given by partial normalization and closed immersion. It is easy to see $\mathrm{Fgt}$ is $\mathrm{Aut}(\Gamma_1)\times\mathrm{Aut}(\Gamma_2)$-equivariant. Taking quotient we get morphism $\mathcal{D}_{\Gamma_1}\to\mathcal{D}_{\Gamma_2}$. Similarly by equivariance of $\mu$, we get morphism $[\mathcal{Y}_2/\mathrm{Aut}(\Gamma_2)]\times_{\mathcal{D}_{\Gamma_2}}\mathcal{D}_{\Gamma_1}\to[\mathcal{Y}_1/\mathrm{Aut}(\Gamma_1)]$. Note that $[\mathcal{Y}_i/\mathrm{Aut}(\Gamma_i)]\to[{\mathcal{M}}_{\Gamma_i}/\mathrm{Aut}(\Gamma_i)]$ is the universal family $\mathcal{X}_i\to\mathcal{D}_i$, hence
we get the following diagram of universal families:
$$\xymatrix{
\mathcal{X}_1\ar[d] & \mathcal{D}_{\Gamma_1}\times_{\mathcal{D}_{\Gamma_2}}\mathcal{X}_2\ar[l]_-t\ar[r]\ar[d] & \mathcal{X}_2\ar[d] \\%
\mathcal{D}_{\Gamma_1} & \mathcal{D}_{\Gamma_1}\ar[r]\ar[l]_{\sim} & \mathcal{D}_{\Gamma_2}
}
$$
The $X_1/k_1$, $(X_2)_{k_1}/k_1$ and $X_2/k_2$ are the generic fibers of the columns.
\end{proof}
\begin{theorem}\label{thm2}Let $\alpha_i$ be the Brauer class associated with $\Gamma_i$. Then $\alpha_2$ maps to $\alpha_1$, under the natural map induced by base change and restriction: $\mathrm{Br}(\mathrm{Pic}^0_{X_2/k_2})\to\mathrm{Br}(\mathrm{Pic}^0_{(X_2)_{k_1}/k_1})\to\mathrm{Br}(\mathrm{Pic}^0_{X_1/k_1}).$
\end{theorem}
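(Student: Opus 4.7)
My plan is to decompose the natural map $\mathrm{Br}(\mathrm{Pic}^0_{X_2/k_2})\to\mathrm{Br}(\mathrm{Pic}^0_{(X_2)_{k_1}/k_1})\to\mathrm{Br}(\mathrm{Pic}^0_{X_1/k_1})$ according to the two squares of the diagram supplied by the lemma, and to track $\alpha_2$ through each square separately. The first step handles the right square, which is cartesian and witnesses $(X_2)_{k_1}=X_2\times_{k_2}k_1$. Since formation of $\mathrm{Pic}^0$ for a proper geometrically connected curve commutes with field extension, we have $\mathrm{Pic}^0_{(X_2)_{k_1}/k_1}=\mathrm{Pic}^0_{X_2/k_2}\times_{k_2}k_1$; and since the construction of $\alpha$ as $\mathrm{d}_2^{0,1}(1_{\mathrm{Pic}^0})$ from section \ref{coh} is natural with respect to pullback of the underlying Leray spectral sequence, the first map in the composition carries $\alpha_2$ to $\alpha_{(X_2)_{k_1}}$.

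For the second step, the left square provides a morphism $t\colon (X_2)_{k_1}\to X_1$ of proper geometrically connected $k_1$-curves (a composition of a partial normalization and a closed immersion, with $(X_2)_{k_1}$ geometrically connected as a base change of the geometrically connected $X_2$). I would then apply Theorem \ref{thmpull} with $X=(X_2)_{k_1}$ and $Y=X_1$: the induced pullback $\phi\colon\mathrm{Pic}^0_{X_1/k_1}\to\mathrm{Pic}^0_{(X_2)_{k_1}/k_1}$ gives $\phi^{*}(\alpha_{(X_2)_{k_1}})=\alpha_{1}$. Composing the two steps yields $\alpha_2\mapsto\alpha_{(X_2)_{k_1}}\mapsto\alpha_1$, which is the claim.

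I do not anticipate any substantial obstacle: the statement is essentially the globalization of Theorem \ref{thmpull} across the two squares of the lemma, one base change and one morphism of curves. The routine points to verify are (a) naturality of the differential $\mathrm{d}_2^{0,1}$ under flat base change of the ground field, which is inherited from the Leray spectral sequence, and (b) well-definedness of $\phi$, i.e.\ that both partial normalization and the inclusion of a union of components pull multi-degree zero line bundles back to multi-degree zero ones; this is immediate since each operation preserves the degree of the restriction to every irreducible component.
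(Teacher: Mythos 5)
Your proposal is correct and matches the paper's argument essentially verbatim: the paper likewise uses functoriality of the Leray spectral sequence to see that $\alpha_2$ base-changes to $\mathrm{d}_2^{0,1}(1_{\mathrm{Pic}^0_{(X_2)_{k_1}/k_1}})$, and then applies Theorem \ref{thmpull} to the morphism $(X_2)_{k_1}\to X_1$ from the lemma to conclude that this class maps to $\alpha_1$. Your added checks (naturality of $\mathrm{d}_2^{0,1}$ under base change and that $\phi$ is well defined on multi-degree zero line bundles) are sensible elaborations of points the paper leaves implicit.
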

\begin{proof} By functoriality of Leray spectral sequence, we see $\alpha_2$ restricts to $\mathrm{d}_2^{0,1}(1_{\mathrm{Pic}^0_{(X_2)_{k_1}/k_1}})$. Then apply Theorem \ref{thmpull} to $(X_2)_{k_1}\to X_1$, we see $\mathrm{d}_2^{0,1}(1_{\mathrm{Pic}^0_{(X_2)_{k_1}/k_1}})$ maps to $\alpha_1$.
\end{proof} 
\section{Examples}\label{sec6}
We give some examples where the period and index of Brauer class can be determined. In all these examples period to equal index.
\begin{example}\label{Example00} Let $g\geq3$ be an integer. Let $\Gamma$ be the graph with $g-1$ vertices $v_1,\dots, v_{g-1}$, and two edges $e_i,f_i$ between $\{v_i,v_{i+1}\}$ for $i\in\mathbb{Z}/(g-1)\mathbb{Z}$, then $g(\Gamma)=g$ (See Fig. \ref{Example00} for $g=5$). Let $\alpha$ be the Brauer class associated with graph $\Gamma$. Note the $g$ loops $f_1\dots f_{g-1}$,\ $e_if_i$ form a basis for $\mathrm{H}_1(\Gamma,\mathbb{Z})$ as $\mathbb{Z}/(g-1)\mathbb{Z}$-module, with rotation action on indices. Applying Theorem \ref{thm} to the loop $f_1\dots f_{g-1}$, we know $g-1|\mathrm{per}(\alpha)$. By Theorem \ref{ind}, $\mathrm{ind}(\alpha)|g-1$, so $\mathrm{per}(\alpha)=\mathrm{ind}(\alpha)=g-1$.
\end{example}
\begin{remark}Let $g\geq3$ be an integer. Let $\mathcal{M}_g$ be the moduli stack of smooth genus $g$ curves. Let $k$ be its function field, let $X\to k$ be the universal curve. Let $\alpha=\mathrm{d}_2^{0,1}(1_{\mathrm{Pic}^0_{X/k}})$ be the Brauer class. Then since period drops by specialization, the previous example shows $g-1|\mathrm{per}(\alpha)$, thus $\mathrm{per}(\alpha)=\mathrm{ind}(\alpha)=g-1$.
\end{remark}

\begin{tikzpicture}[scale=1]
\coordinate (A1) at (2,1);
\coordinate (A2) at (0,1);
\coordinate (A3) at (0,-1);
\coordinate (A4) at (2,-1);

\draw (A1) .. controls (0.95,1.1) and (1.05,1.1) .. (A2);
\draw (A1) .. controls (0.95,0.9) and (1.05,0.9) .. (A2);
\draw (A2) .. controls (-0.1,-0.05) and (-0.1,0.05) .. (A3);
\draw (A2) .. controls (0.1,-0.05) and (0.1,0.05) .. (A3);
\draw (A3) .. controls (0.95,-1.1) and (1.05,-1.1) .. (A4);
\draw (A3) .. controls (0.95,-0.9) and (1.05,-0.9) .. (A4);
\draw (A4) .. controls (1.9,0.05) and (1.9,-0.05) .. (A1);
\draw (A4) .. controls (2.1,0.05) and (2.1,-0.05) .. (A1);

\node [above] at (A1) {$v_1$};
\node [above] at (A2) {$v_2$};
\node [below] at (A3) {$v_3$};
\node [below] at (A4) {$v_4$};
\node [above] at (1,1) {$e_1$};
\node [left ] at (0,0) {$e_2$};
\node [below] at (1,-1) {$e_3$};
\node [right] at (2,0) {$e_4$};
\node [below] at (1,1) {$f_1$};
\node [right] at (0,0) {$f_2$};
\node [above] at (1,-1) {$f_3$};
\node [left] at (2,0) {$f_4$};
\node [below] at (1,-1.5) {Fig \ref{Example00}};
\end{tikzpicture}
\hspace{2cm}\begin{tikzpicture}[scale=1.3]
\coordinate (A1) at (0.00,1.00);
\coordinate (A2) at (-0.95,0.31);
\coordinate (A3) at (-0.59,-0.81);
\coordinate (A4) at (0.59,-0.81);
\coordinate (A5) at (0.95,0.31);

\draw (A1) -- (A2) -- (A3) -- (A4) -- (A5) -- (A1);
\draw (A1) -- (A3) -- (A5) -- (A2) -- (A4) -- (A1);
\node [above] at (A1) {$v_1$};
\node [left] at (A2) {$v_2$};
\node [below] at (A3) {$v_3$};
\node [below] at (A4) {$v_4$};
\node [right] at (A5) {$v_5$};
\node [below] at (0,-1.2) {Fig \ref{Example1}.i};
\end{tikzpicture}
\hspace{2cm}
\begin{tikzpicture}[scale=1.3]
\coordinate (A1) at (0.00,0.00);
\coordinate (A2) at (0.00,1.00);
\coordinate (A3) at (-1.06,-0.55);
\coordinate (A4) at (0.57,-0.95);
\coordinate (A5) at (1.00,0.00);

\draw (A2) -- (A3) -- (A4) -- (A5) -- (A2);
\draw (A2) -- (A4);
\draw (A1) -- (A2);
\draw (A1) -- (A3);
\draw (A1) -- (A4);
\draw (A1) -- (A5);
\draw (A3) -- (A5);

\node [above left] at (A1) {$v_4$};
\node [left] at (A2) {$v_5$};
\node [below] at (A3) {$v_1$};
\node [below] at (A4) {$v_2$};
\node [right] at (A5) {$v_3$};
\node [below] at (0,-1.2) {Fig \ref{Example1}.ii};
\end{tikzpicture}

\begin{example}\label{Example1}Let $\Gamma=K_5$ be the complete graph with $5$ vertices. Label the vertices of the graph by $v_1,\dots, v_5$, then $\mathrm{Aut}(\Gamma)=S_5$ acts by permutation on vertices. In this case, the loops $v_{i-1}v_iv_{i+1}(i=1,\dots,5), v_1v_2v_3v_4v_5$ form a basis of $H_1(\Gamma,\mathbb{Z})$. Take $\sigma=(12345)\in S_5$, then $L=v_1v_2v_3v_4v_5$ is a cycle class fixed by permutation $\sigma$, and is a direct summand. Let $\alpha$ be the Brauer class associated with $\Gamma$, then by Theorem \ref{thm}, we have $5|\mathrm{per}(\alpha)$.
Note that $g(\Gamma)=6$, therefore by Theorem \ref{ind} we conclude $\mathrm{per}(\alpha)=\mathrm{ind}(\alpha)=5$.

On the other hand, although the loop $L'=v_1v_2v_3$ is invariant under $\sigma=(123)$, it does not generate a summand of $\mathrm{H}_1(\Gamma,\mathbb{Z})$ as $\langle\sigma\rangle$-module, so we cannot conclude $3|\mathrm{per}(\alpha).$
\end{example}

\begin{example}\label{Example4}
Let $\Gamma$ be the graph of complete graph $K_4$, with edge doubled along a simple length four loop (Fig \ref{Example4}.i). Let $\alpha$ be the associated Brauer class. Consider the automorphism $\sigma\in\mathrm{Aut}(\Gamma)\colon v_i\mapsto v_{5-i}$, but switch the edges between $v_1v_4, v_2v_3$ (we may view this automorphism as $180$ degree rotation in Fig \ref{Example4}.ii). Then apply Theorem \ref{thm} to the loop of $v_2v_3$, we can show  $2|\mathrm{per}(\alpha)$. Since $g(\Gamma)=7$, we know $\mathrm{ind}(\alpha)|6$. Apply Theorem \ref{extra1} to $\Gamma_0=\Gamma$, we see $\mathrm{ind}(\alpha)|10$, thus $\mathrm{per}(\alpha)=\mathrm{ind}(\alpha)=2$.
\end{example}

\begin{center}
\begin{tikzpicture}[scale=1]
\coordinate (A1) at (2,1);
\coordinate (A2) at (0,1);
\coordinate (A3) at (0,-1);
\coordinate (A4) at (2,-1);

\draw (A1) .. controls (0.95,1.1) and (1.05,1.1) .. (A2);
\draw (A1) .. controls (0.95,0.9) and (1.05,0.9) .. (A2);
\draw (A2) .. controls (-0.1,-0.05) and (-0.1,0.05) .. (A3);
\draw (A2) .. controls (0.1,-0.05) and (0.1,0.05) .. (A3);
\draw (A3) .. controls (0.95,-1.1) and (1.05,-1.1) .. (A4);
\draw (A3) .. controls (0.95,-0.9) and (1.05,-0.9) .. (A4);
\draw (A4) .. controls (1.9,0.05) and (1.9,-0.05) .. (A1);
\draw (A4) .. controls (2.1,0.05) and (2.1,-0.05) .. (A1);

\draw (A1) -- (A3);\draw (A2) -- (A4);
\node [above] at (A1) {$v_1$};
\node [above] at (A2) {$v_2$};
\node [below] at (A3) {$v_3$};
\node [below] at (A4) {$v_4$};
\node [below] at (1,-1.5) {Fig \ref{Example4}.i};
\end{tikzpicture}\hspace{3cm}
\begin{tikzpicture}[scale=0.9]
\coordinate (A1) at (0,1.5);
\coordinate (A2) at (0,0.5);
\coordinate (A3) at (0,-0.5);
\coordinate (A4) at (0,-1.5);

\draw (A1) .. controls (-0.7,0.5) and (-0.7,-0.5) .. (A4);
\draw (A1) .. controls (0.7,0.5) and (0.7,-0.5) .. (A4);
\draw (A2) .. controls (-0.1,0.1) and (-0.1,-0.1) .. (A3);
\draw (A2) .. controls (0.1,0.5) and (0.1,-0.1) .. (A3);
\draw (A1) .. controls (-0.4,0.6) and (-0.4,0.6) .. (A3);
\draw (A1) .. controls (-0.1,0.7) and (-0.1,0.8) .. (A2);
\draw (A1) .. controls (0.1,0.7) and (0.1,0.8) .. (A2);
\draw (A3) .. controls (-0.1,-0.7) and (-0.1,-0.8) .. (A4);
\draw (A3) .. controls (0.1,-0.7) and (0.1,-0.8) .. (A4);
\draw (A2) .. controls (0.4,-0.4) and (0.4,-0.6) .. (A4);
\node [right] at (A1) {$v_1$};
\node [right] at (A2) {$v_2$};
\node [left] at (A3) {$v_3$};
\node [right] at (A4) {$v_4$};\node [below] at (0,-1.7) {Fig \ref{Example4}.ii};
\end{tikzpicture}\hspace{2cm}
\end{center} 
\begin{example}\label{hybrid}

Let $\Gamma$ be the graph with vertices $v_1,\dots,v_4; w_1,\dots,w_4$, with edges as in the following diagram. This graph admits Fig \ref{Example00} as an $\mathrm{Aut}(\Gamma)$-invariant subgraph. Apply Theorem \ref{thm} and Theorem \ref{thm2}, we know $\mathrm{per}(\alpha)=4$. Note that index drops by base change\footnote{Azumaya algebras pull back to Azumaya algebras.}, we conclude that $\mathrm{ind}(\alpha)=4$.
\end{example}

\begin{example}\label{splitcurve}Let $\Gamma=K_{3,4}$ be the complete bipartite graph. Let $\alpha$ be the associated Brauer class. Since $\mathrm{g}(\Gamma)=6$, we know $\mathrm{per}(\alpha)|g-1=5$. On the other hand, by \ref{simpleest}, $\mathrm{per}(\alpha)$ divides $|\mathrm{Aut}(K_{3,4})|=|S_3\times S_4|=4!\times 3!$. Then $\mathrm{per}(\alpha)|(5,4!\times 3!)=1$, hence the Brauer class is trivial.
\end{example}

\begin{center}
\begin{tikzpicture}[scale=1]
\coordinate (A1) at (2,1);
\coordinate (A2) at (0,1);
\coordinate (A3) at (0,-1);
\coordinate (A4) at (2,-1);
\coordinate (B1) at (1,0.3);
\coordinate (B2) at (1,-0.3);
\coordinate (B3) at (0.7,0);
\coordinate (B4) at (1.3,0);
\draw (A2).. controls (0.48,0.76) and (0.61,0.67) ..(B1);
\draw (A2).. controls (0.39,0.63) and (0.52,0.54) ..(B1);
\draw (A2).. controls (0.37,0.61) and (0.46,0.48) ..(B3);
\draw (A2).. controls (0.33,0.39) and (0.24,0.52) ..(B3);

\draw (A3).. controls (0.37,-0.61) and (0.46,-0.48) ..(B3);
\draw (A3).. controls (0.33,-0.39) and (0.24,-0.52) ..(B3);
\draw (A3).. controls (0.48,-0.76) and (0.61,-0.67) ..(B2);
\draw (A3).. controls (0.52,-0.54) and (0.39,-0.63) ..(B2);
\draw (A4).. controls (1.48,-0.54) and (1.61,-0.63) ..(B2);
\draw (A4).. controls (1.39,-0.67) and (1.52,-0.76) ..(B2);
\draw (A4).. controls (1.76,-0.52) and (1.67,-0.39) ..(B4);
\draw (A4).. controls (1.54,-0.48) and (1.63,-0.61) ..(B4);
\draw (A1).. controls (1.76,0.52) and (1.67,0.39) ..(B4);
\draw (A1).. controls (1.54,0.48) and (1.63,0.61) ..(B4);
\draw (A1).. controls (1.48,0.54) and (1.61,0.63) ..(B1);
\draw (A1).. controls (1.39,0.67) and (1.52,0.76) ..(B1);

\draw (A1) .. controls (0.95,1.1) and (1.05,1.1) .. (A2);
\draw (A1) .. controls (0.95,0.9) and (1.05,0.9) .. (A2);
\draw (A2) .. controls (-0.1,-0.05) and (-0.1,0.05) .. (A3);
\draw (A2) .. controls (0.1,-0.05) and (0.1,0.05) .. (A3);
\draw (A3) .. controls (0.95,-1.1) and (1.05,-1.1) .. (A4);
\draw (A3) .. controls (0.95,-0.9) and (1.05,-0.9) .. (A4);
\draw (A4) .. controls (1.9,0.05) and (1.9,-0.05) .. (A1);
\draw (A4) .. controls (2.1,0.05) and (2.1,-0.05) .. (A1);

\node [above] at (A1) {$v_1$};
\node [above] at (A2) {$v_2$};
\node [below] at (A3) {$v_3$};
\node [below] at (A4) {$v_4$};
\node [above] at (B1) {$w_1$};
\node [left] at (B3) {$w_2$};
\node [below] at (B2) {$w_3$};
\node [right] at (B4) {$w_4$};
\node [below] at (1,-1.5) {Fig \ref{hybrid}.i};
\end{tikzpicture}\hspace{3cm}
\begin{tikzpicture}[scale=1]
\coordinate (v1) at (-1,1);
\coordinate (v2) at (-1,0);
\coordinate (v3) at (-1,-1);
\coordinate (w1) at (1,1);
\coordinate (w2) at (1,0.3);
\coordinate (w3) at (1,-0.3);
\coordinate (w4) at (1,-1);%
\draw (v1)--(w1);\draw (v1)--(w2);\draw (v1)--(w3);\draw (v1)--(w4);
\draw (v2)--(w1);\draw (v2)--(w2);\draw (v2)--(w3);\draw (v2)--(w4);
\draw (v3)--(w1);\draw (v3)--(w2);\draw (v3)--(w3);\draw (v3)--(w4);
\node [left] at (v1) {$v_1$};
\node [left] at (v2) {$v_2$};
\node [left] at (v3) {$v_3$};
\node [right] at (w1) {$w_1$};
\node [right] at (w2) {$w_2$};
\node [right] at (w3) {$w_3$};
\node [right] at (w4) {$w_4$};
\node [below] at (0,-1.2) {Fig \ref{splitcurve}};
\end{tikzpicture}\hspace{1.6cm}
\end{center}

\section{Further Discussion}\label{sec5}
\subsection{Period and index}
Let $k_0$ be a field. Let $\Gamma$ be a graph, such that each vertex has degree at least $4$. Let $X/k$ be the associated curve and $\alpha$ be the associated Brauer class. The period can be calculated by group cohomology, thus $\mathrm{per}(\alpha)$ is independent of choice $k_0$. We may ask
\begin{question}Does $\mathrm{ind}(\alpha)$ depend on the field $k_0$?
\end{question}
In all the examples above, as long as we can determine period and index, $\mathrm{per}(\alpha)=\mathrm{ind}(\alpha)$. We may ask if this is true in general.
\begin{question} Is it always true that $\mathrm{per}(\alpha)=\mathrm{ind}(\alpha)$?
\end{question}
Here is an example where the author cannot determine the period and index.
\begin{example}\label{Example3} Let $\Gamma$ be the graph of truncated icosahedron(soccer), with adjacent edges of hexagons doubled. Apply Theorem \ref{thm} to the hexagon containing $v_1,v_2,v_3$, we know $3|\mathrm{per}(\alpha)$. Apply Theorem \ref{thm} to the loop $v_2v_4$, we know $2|\mathrm{per}(\alpha)$. Apply Theorem \ref{thm} to the loop $v_1v_4v_5v_6v_7$, one know $5|\mathrm{per}(\alpha)$. Thus $30|\mathrm{per}(\alpha)$. In this case $g=61$, so the possible period and index could be $(30,30),(30,60)$ or $(60,60)$.
\end{example}

\begin{center}
\begin{tikzpicture}[scale=0.7]
\coordinate (A1) at (0.00,1.00);
\coordinate (A2) at (-0.95,0.31);
\coordinate (A3) at (-0.59,-0.81);
\coordinate (A4) at (0.59,-0.81);
\coordinate (A5) at (0.95,0.31);
\coordinate (B1) at (0.00,1.61);
\coordinate (B2) at (-1.53,0.50);
\coordinate (B3) at (-0.95,-1.30);
\coordinate (B4) at (0.95,-1.30);
\coordinate (B5) at (1.53,0.50);
\coordinate (C1) at (0.67,1.78);
\coordinate (C2) at (-0.67,1.78);
\coordinate (C3) at (-1.48,1.18);
\coordinate (C4) at (-1.89,-0.09);
\coordinate (C5) at (-1.58,-1.04);
\coordinate (C6) at (-0.50,-1.83);
\coordinate (C7) at (0.50,-1.83);
\coordinate (C8) at (1.58,-1.04);
\coordinate (C9) at (1.89,-0.09);
\coordinate (C0) at (1.48,1.18);

\draw (A1) -- (A2) -- (A3) -- (A4) -- (A5) -- (A1);
\draw (C1) -- (C2);
\draw [ultra thick](C2)--(C3);
\draw (C3) -- (C4);
\draw [ultra thick](C4)--(C5);
\draw (C5) -- (C6);\draw [ultra thick](C6)--(C7);
\draw (C7) -- (C8);\draw [ultra thick](C8)--(C9);
\draw (C9) -- (C0);
\draw [ultra thick](C0) -- (C1);
\draw [ultra thick](A1) -- (B1);
\draw [ultra thick](A2) -- (B2);
\draw [ultra thick](A3) -- (B3);
\draw [ultra thick](A4) -- (B4);
\draw [ultra thick](A5) -- (B5);
\draw (B1) -- (C1);\draw (B1) -- (C2);
\draw (B2) -- (C3);\draw (B2) -- (C4);
\draw (B3) -- (C5);\draw (B3) -- (C6);
\draw (B4) -- (C7);\draw (B4) -- (C8);
\draw (B5) -- (C9);\draw (B5) -- (C0);

\node [below] at (A1) {$v_1$};
\node [left] at (B2) {$v_2$};
\node [above] at (C2) {$v_3$};
\node [right] at (A2) {$v_4$};
\node [above right] at (A3) {$v_5$};
\node [above right] at (A4) {$v_6$};
\node [above] at (A5) {$v_7$};
\node [below] at (0,-2.2) {Fig \ref{Example3}};
\end{tikzpicture}
\end{center} 

\subsection{Cyclicity}
Finally, we say a little bit about cyclicity.
\begin{theorem} If $g$ is even, then the Brauer class in Theorem \ref{Example00} is cyclic.
\end{theorem}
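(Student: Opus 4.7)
Let $m := g-1$, which is odd since $g$ is even. From Example \ref{Example00} we already know $\mathrm{per}(\alpha)=\mathrm{ind}(\alpha)=m$, so $\alpha$ is represented by an Azumaya algebra of degree $m$ and the task reduces to exhibiting a cyclic Galois extension $L/K$ of degree $m$ splitting $\alpha$, where $K=\mathrm{frac}(\mathrm{Pic}^0_{X/k})$. The natural cyclic group to use is the rotation subgroup $C:=\langle\sigma\rangle\subset G:=\mathrm{Aut}(\Gamma)=(\mathbb{Z}/2)^m\rtimes D_m$; note that $|G|=2^{m+1}\cdot m$, so $C$ is a Hall $\{2\}'$-subgroup of the solvable group $G$, and $[G:C]=2^{m+1}$ is coprime to $m$.

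The first step is to pass to the intermediate field $K'':=K\otimes_k(k')^C$. Because $[K'':K]=2^{m+1}$ is coprime to $m$, the restriction-corestriction argument yields an injection $\mathrm{Br}(K)[m]\hookrightarrow\mathrm{Br}(K'')[m]$; so it suffices to identify $\alpha$ by its image in $\mathrm{Br}(K'')$. Over $K''$, the extension $K\otimes_k k'/K''$ is cyclic Galois of degree $m$ with group $C$, and it splits $\alpha_{K''}$ (since $X$ acquires rational points over $k'$). Hence $\alpha_{K''}$ is represented by some cyclic algebra $(K\otimes_k k'/K'',\sigma,b)$ with $b\in (K'')^\times$. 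Using the cocycle description in Proposition \ref{cal} and the computation in Example \ref{Example00}---which identifies the restriction of $\alpha_e$ to $C$ with a generator of $H^2(C,\mathbb{Z}\cdot L_0)\cong\mathbb{Z}/m$ through projection to the $\sigma$-fixed summand $\mathbb{Z}\cdot L_0$ of $H_1(\Gamma,\mathbb{Z})$---one can read off $b$ explicitly as a rational function on $\mathrm{Pic}^0_{X_{k''}/k''}$ associated with the character $L_0$.

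The second step, which I expect to be the main obstacle, is to descend this cyclic algebra structure from $K''$ down to $K$. The difficulty is that $C$ is \emph{not} normal in $G$ (indeed $G^{\mathrm{ab}}$ is $2$-torsion, so $G$ has no cyclic quotient of order $m$), so the intermediate field $(k')^C$ is not Galois over $k$ and classical equivariant Galois descent does not directly apply; moreover $L/K$ cannot be taken as a subextension of $k'/k$, so it must be constructed as a non-trivial cyclic $m$-cover of $\mathrm{Pic}^0_{X/k}$ itself (a Kummer-type cover associated with the character $L_0$, twisted so as to be defined over $k$). The key will be to exploit that $m$ is odd to decompose the $G$-action on the relevant cohomology: the complementary $\{2\}$-part $(\mathbb{Z}/2)^m\rtimes\langle r\rangle$ of $G$ acts on the $L_0$-piece of $H_1(\Gamma,\mathbb{Z})\otimes\mathbb{Z}/m$ through a character, and odd parity of $m$ allows one to average (equivalently, invert $2$) to produce a canonical $G$-invariant representative of $b$ lying in $K^\times$. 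Once $b$ is in $K^\times$ and an appropriate cyclic $L/K$ is exhibited, the cyclic algebra $(L/K,\sigma,b)$ over $K$ pulls back to the algebra already identified over $K''$, and the injection $\mathrm{Br}(K)[m]\hookrightarrow\mathrm{Br}(K'')[m]$ forces $(L/K,\sigma,b)$ to represent $\alpha$.
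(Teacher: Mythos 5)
Your first step is sound: since $[G:C]=2^{g}$ is coprime to $m=g-1$, restriction to $K''$ is injective on $\mathrm{Br}(K)[m]$, and over $K''$ the class becomes split by the cyclic extension $K\otimes_k k'/K''$, hence is a cyclic algebra there. But your second step is a genuine gap, and it is the entire content of the theorem. You correctly observe that $C=\langle\sigma\rangle$ is not normal in $G$ and that $G^{\mathrm{ab}}$ is $2$-torsion, so no cyclic degree-$m$ splitting field can be found inside $k'/k$; you then propose to build a cyclic $m$-cover of $\mathrm{Pic}^0_{X/k}$ ``twisted so as to be defined over $k$'' and to produce $b\in K^\times$ by ``averaging'' using that $m$ is odd. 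None of this is carried out: you do not construct $L/K$, you do not show the averaged $b$ is well defined or that the resulting pair $(L/K,\sigma,b)$ restricts to the algebra you identified over $K''$, and there is no reason offered why such a Kummer-type cover should exist over $K$ at all (producing a degree-$m$ cyclic splitting field for a degree-$m$ division algebra is precisely the cyclicity problem, which cannot be assumed). As written, the argument establishes only that $\alpha_{K''}$ is cyclic, which is weaker than the claim.

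The paper takes a different and much shorter route that sidesteps the non-normality issue entirely: instead of the cyclic subgroup $C$, it uses the \emph{normal} $2$-subgroup $(\mathbb{Z}/2\mathbb{Z})^{g-1}\subset\mathrm{Aut}(\Gamma)$, whose fixed field $k''$ is Galois over $k$ with \emph{dihedral} group $D_{g-1}$. Since $\alpha$ dies over $k'$, the restriction--corestriction identity gives $2^{g-1}\,\mathrm{Res}_{k''/k}(\alpha)=0$, and as $\alpha$ has odd order $g-1$ this forces $\mathrm{Res}_{k''/k}(\alpha)=0$. Thus $\alpha$ is a dihedral crossed product of odd degree, and the Rowen--Saltman theorem that such algebras are cyclic finishes the proof. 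If you want to salvage your approach, the lesson is to trade the cyclic subgroup for a normal subgroup whose quotient is a group (here dihedral) for which cyclicity of crossed products is already known, rather than trying to descend a cyclic structure across a non-Galois intermediate field.
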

\begin{proof} Let $k'$ be the splitting field of $X$ as defined in section \ref{2.3}. Note that $\mathrm{Gal}(k'/k)=\mathrm{Aut}(\mathrm{\Gamma}_g)=D_{g-1}\times(\mathbb{Z}/2\mathbb{Z})^{g-1}$. Take the invariant subfield $k''$ of $(\mathbb{Z}/2\mathbb{Z})^{g-1}$. Then $k'/k''$ is an extension of degree $2^{g-1}$ and $k''/k$ is a dihedral extension. The Brauer class $\alpha\in\mathrm{Br}(k)$ restricts to $0\in\mathrm{Br}(k')$, since the universal curve splits over $k'$. This implies the class $\mathrm{Res}_{k''/k}(\alpha)=0$ in $\mathrm{Br}(k'')$, as we know $\alpha$ has odd degree $g-1$ and $0=\mathrm{Cores}_{k'/k''}\circ\mathrm{Res}_{k'/k''}(\mathrm{Res}_{k''/k}\alpha)=2^{g-1}\mathrm{Res}_{k''/k}(\alpha)$. Then we know the Brauer class is cyclic since dihedral algebras are known to be cyclic, see \cite[1]{rowen}.
\end{proof}

The cyclicity of the last example maybe expected from the symmetries of the graph. In Example \ref{Example1}, the automorphism graph is symmetric group $S_5$. In Example \ref{Example3}, the graph has automorphism $A_5$ has a normal subgroup, we may ask:
\begin{question}Are the Brauer classes in Example \ref{Example1} or Example \ref{Example3} cyclic?
\end{question}

\bibliographystyle{alpha}
\bibliography{references}

\end{document}